\documentclass[10pt]{amsart}

\usepackage[latin2]{inputenc}
\usepackage{amsmath}
\usepackage{graphicx}
\usepackage{amssymb}
\usepackage{esint}
\usepackage{color}
\usepackage{amsthm}
\usepackage{epsfig}
\usepackage{enumitem}
\usepackage{mathtools}
\usepackage{esvect}
\usepackage{mathrsfs}

\usepackage{longtable}

\usepackage[dvipsnames]{xcolor}
\usepackage{tikz}
\usepackage{xxcolor}

\usepackage{tikz}
\usetikzlibrary{calc,intersections,through,backgrounds,patterns,arrows.meta, math,through}
\usepackage{tkz-euclide}
\usepackage[labelformat = brace, position=top]{subcaption}
\usepackage[english]{babel}
\newtheorem{theorem}{Theorem}

\newtheorem{lemma}[theorem]{Lemma}

\newtheorem{remark}[theorem]{Remark}
\newtheorem*{theorem*}{Theorem}

\theoremstyle{definition}

\newcommand{\Rd}[1][d]{{\mathbb{R}^{#1}}}
\allowdisplaybreaks[4]

\newcommand{\dH}[1][d-1]{\,\mathrm{d}\mathcal{H}^{#1}}
\newcommand{\dx}{\,\mathrm{d}x}

\newcommand{\dS}{\,\mathrm{d}S}
\newcommand{\ds}{\,\mathrm{d}s}

\newcommand{\dvectheta}{\,\mathrm{d}\vec{\theta}}

\newcommand{\dvectildetheta}{\,\mathrm{d}\vec{\theta'}}
\newcommand{\n}{\mathrm{n}}
\newcommand{\ta}{\mathrm{t}}
\newcommand{\eps}{\varepsilon}
\newcommand{\supp}{\mathrm{supp}\,}
\newcommand{\dist}{\mathrm{dist}}

\usepackage[linktocpage=true,colorlinks=true]{hyperref}

\begin{document}

\title[A quantitative isoperimetric inequality via calibrations]
{The quantitative isoperimetric inequality: A calibration argument}

\author{Sebastian Hensel}
\address{Universit{\"a}t Leipzig, Fakult\"at f\"ur Mathematik und Informatik, Neues Augusteum, Augustusplatz 10, 04109 Leipzig, Germany}
\email{sebastian.hensel@uni-leipzig.de}

\author{Tim Laux}
\address{Universit\"at Heidelberg, Institut f\"ur Mathematik \& Interdisziplin\"ares Zentrum f\"ur Wissenschaftliches Rechnen, Mathematikon, Im Neuenheimer Feld 205, 69120 Heidelberg, Germany}
\email{tim.laux@math.uni-heidelberg.de}

\thanks{This project was initiated during a stay of both authors at the Erwin Schr\"odinger International Institute for Mathematics and Physics (ESI)
for the workshop ``Geometric Variational Problems'' in the framework of the thematic program ``Free Boundary Problems''. 
We warmly thank ESI for the support and hospitality. The authors are also grateful to Marco Pozzetta for insightful discussions
on the subjects of the paper and, in particular, for suggesting the simple contradiction argument for a proof of Theorem~\ref{theo:globalRegime}
based on Theorem~\ref{theo:pertRegime}.}%

\begin{abstract}
	We give a short proof of the quantitative isoperimetric inequality.
	Our argument is based on a notion of quantitative calibrations which induce a natural distance controlling both the Fraenkel asymmetry and the tilt excess. 
	The proof of our key result which can be viewed as a nonlinear, geometric version of Fuglede's result in $BV$ is direct and self-contained. 
	In particular, we do not make any use of regularity theory for almost minimizers.
\end{abstract}

\keywords{%
}%

\maketitle
\tableofcontents

\section{Introduction}

\subsection{Context and motivation}

The isoperimetric inequality is the answer to one of the oldest problems in mathematics stating that, in Euclidean space, among all measurable sets of fixed volume, the ball minimizes the perimeter, or equivalently\footnote{Here, by scaling invariance, we restrict ourselves to the case of the unit ball $B_1$.}
\begin{equation}\label{eq:Isoperimetric Inequality}
	0\leq P(F)-P( B_1) \quad \text{for all } F\subset \Rd \text{ with } |F|=|B_1|.
\end{equation}
While the first arguments go back to Ancient Greece, it was not until the work of De~Giorgi~\cite{DeGiorgi_isoperimetric} in the 1950's that the problem was completely solved and a full proof among arbitrary measurable sets was found.
Moreover, De~Giorgi's  symmetrization argument also shows that the only sets that satisfy equality in~\eqref{eq:Isoperimetric Inequality} are balls (up to sets of measure zero).

This suggests the natural question of \emph{stability}, namely: 
If the isoperimetric deficit $P(F)-P(B_1)$ is small, is $F$ close to a ball in a suitable topology?
The answer can be elegantly formulated in terms of a quantitative version of the isoperimetric inequality~\eqref{eq:Isoperimetric Inequality} of the form
\begin{equation}\label{eq:Quant Isoperimetric Inequality}
	  \frac1C \inf_{x_0\in\Rd} \rho^2(F{-}x_0,B_1)  \leq P(F)-P( B_1)
	\quad \text{for all } F\subset \Rd \text{ with } |F|=|B_1|,
\end{equation}
where $\rho$ is a suitable ``distance'' between sets and the constant $C=C(d)$ is independent of $F$.
The first complete quantitative isoperimetric inequality was proven by Fusco, Maggi, and Pratelli~\cite{Fusco2008} in 2008, and since then a few alternative proofs have been found.
Figalli, Maggi, and Pratelli~\cite{FigalliMaggiPratelli2010} used optimal transport instead of
symmetrization, quantifying an argument laid out by Gromov~\cite[Appendix]{Milman86_Gromov}.
Cicalese and Leonardi~\cite{Cicalese2012} found the optimal constant based on a selection principle and regularity theory for almost-minimizers of the perimeter.
All of these proofs use the Fraenkel asymmetry on the left-hand side of~\eqref{eq:Quant Isoperimetric Inequality}, i.e., the $L^1$-distance $ \rho(F,B_1) = |F\Delta B_1|$.
Fusco and Julin obtained a lower bound in terms of the tilt-excess $\rho^2(F,B_1) = \int_{\partial^* F} |\n_{\partial^\ast F} - \n_{\partial B_1}|^2 \dH$ in~\cite{FuscoJulin14}.

\subsection{Our contribution}

In the present work, we propose an alternative, direct proof of the quantitative isoperimetric inequality based on the notion of \emph{calibrations}.
Our ``quantitative calibration'' is a vector field $\xi$ that is designed to be an almost calibration for the ball $B_1$ and, at the same time, to penalize deviations of any competitor $F$ from $B_1$ in a suitable way.
The construction of $\xi$ is straight-forward in our case of the ball, see Figure~\ref{fig:calibration}, and we expect this to be possible also in more complex problems, for example isoperimetric clusters, by combining our ideas here with those from~\cite{fischer2023}.
This quantitative calibration allows us to quantify the error in the calibration argument for minimality precisely and in the natural topology.
The previous results on the quantitative isoperimetric inequality with the Fraenkel asymmetry~\cite{Fusco2008,FigalliMaggiPratelli2010} or the tilt-excess~\cite{FuscoJulin14} are then direct corollaries.

More precisely, our new notion of quantitative calibration $\xi$ naturally induces the following excess-type functional
\[
	\rho^2(F,B_1)=E_{rel}[F|\xi] = \int_{\partial^\ast F} (1-\xi\cdot \n_{\partial^\ast F})\dH.
\]
The main novelty of the present work is Theorem~\ref{theo:pertRegime} below, which, for any $\gamma>0$, provides the sharp lower bound
\[
	(1-\gamma)\frac12 \rho^2(F,B_1) \leq P(F)- P(B_1)
\]
in the perturbative regime $\rho(F,B_1)\ll_\gamma 1$, and with $F$ having the same barycenter as $B_1$. 
This can be viewed as a nonlinear, geometric Fuglede-type result in $BV$. 
Note that, unlike Fuglede~\cite{Fuglede86}, we do not require additional assumptions like graph-structure or smoothness on $F$, and we obtain the optimal constant $\big(\tfrac12\big)^-$.

In the proof, starting from an arbitrary set of finite perimeter $F$ with $\rho^2(F,B_1) \ll1$, by a slicing argument, we first replace it by a $BV$ graph, then by a smooth graph, and then conclude by a version of Fuglede's classical result, see Figure~\ref{fig:graph_approximation}. 
Here, the key is to estimate all errors in the approximations and the spectral argument by our functional $\rho^2(F,B_1)=E_{rel}[F|\xi]$.
To obtain our global result, Theorem~\ref{theo:globalRegime}, we only need to combine this local result with a standard compactness argument in the case $E_{rel}[F|\xi] \gtrsim1$.

The idea of using calibrations to prove the isoperimetric inequality~\eqref{eq:Isoperimetric Inequality} was already suggested by H\'elein~\cite{MR1306553}. 
However, to the best of our knowledge, our work is the first to prove a quantitative version via a calibration argument. 
The beautiful theory of calibrated geometry has a long history, dating back to Wirtinger~\cite{Wirtinger36} and de~Rham~\cite{deRham57}.
Crucial to the present work are recent techniques from~\cite{FischerHensel2020, FHLS20, fischer2023}.
One of the main advantages of such a calibration argument is that it is direct and does not rely on heavy regularity theory.
In fact, the most technical aspect of our proof is a slicing argument.
We are optimistic that variants of this calibration argument can be applied to other geometric settings and in particular to isoperimetric clusters.

Most recently, in an independent work~\cite{figalli2026sharpstabilityalexandrovstheorem}, Figalli and Zhang prove a similar $BV$ Fuglede result in the graph setting with different techniques.

\subsection{Notation} 
For two sets $A,B$ we denote their symmetric difference by $A\Delta B$.
The open ball of radius $R>0$ and centered at the origin is given by $B_R$, with $\partial B_R$ denoting its boundary.
For a measurable set~$A \subset \Rd$, we write $\chi_{A}$ for its associated indicator function.
If $F \subset \Rd$ is a set of finite perimeter, $\partial^*F$ refers to its reduced boundary, in particular $P(F)=\mathcal{H}^{d-1}(\partial^\ast F)$. 
We denote its measure theoretic (interior) unit normal by $\n_{\partial^\ast F} \colon \partial^\ast F \to \mathbb{S}^{d-1}$.
The $d$-dimensional Lebesgue measure and the $(d{-}1)$-dimensional Hausdorff measure are
denoted by $\mathcal{L}^d$ and $\mathcal{H}^{d-1}$. We also make use of standard
notation for Lebesgue and Sobolev spaces: $L^p$ and $W^{k,p}$.

\section{Main results}
We define our notion of a ``quantitative calibration of the unit ball~$B_1$''
and state our version of the quantitative isoperimetric inequality for the associated excess.

\begin{figure}
	\begin{subfigure}{0.3\textwidth}
		\centering
		\begin{tikzpicture}[scale=1.4]
			
			\draw[red,thick]  (0,0)  circle ({1});

			
			\draw[blue, thick, domain=0:2*pi,smooth,samples=200] plot ({deg( \x) }:{(2-0.6*(cos(deg(3*\x)))^2)/1.73});
		\end{tikzpicture}
	\end{subfigure}
	\begin{subfigure}{0.3\textwidth}
		\centering
		\begin{tikzpicture}[scale=1.4]
		
			\draw[red,thick]  (0,0)  circle ({1});

			\def \xmax{20}
			\foreach \x in {1, ..., \xmax}
				\foreach \r in {  1, 1.3}
				{
				\draw[->] ({deg( 2*pi*\x/(\xmax))}: {\r}) -- + ({deg( 2*pi*\x/(\xmax))}:{-(1-(\r-1)^2)*0.25});
				}
			\def \xmax{20}
			\foreach \x in {1, ..., \xmax}
				\foreach \r in { 0.7}
				{
				\draw[->] ({deg( 2*pi*\x/(\xmax))}: {\r}) -- + ({deg( 2*pi*\x/(\xmax))}:{-(1-(\r-1)^2)*0.25});
				}
			\def \xmax{10}
			\foreach \x in {1, ..., \xmax}
				\foreach \r in { 0.4}
				{
				\draw[->] ({deg( 2*pi*\x/(\xmax))}: {\r}) -- + ({deg( 2*pi*\x/(\xmax))}:{-(1-(\r-1)^2)*0.25});
				}
			\def \xmax{5}
			\foreach \x in {1, ..., \xmax}
				\foreach \r in { 0.2}
				{
				\draw[->] ({deg( 2*pi*\x/(\xmax))}: {\r}) -- + ({deg( 2*pi*\x/(\xmax))}:{-(1-(\r-1)^2)*0.25});
				}
		\end{tikzpicture}
	\end{subfigure}
	\begin{subfigure}{0.3\textwidth}
		\centering
		\begin{tikzpicture}[scale=1.4]
			
			\draw[red,thick]  (0,0)  circle ({1});

			\def \xmax{20}
			
			\draw[blue, thick, domain=0:2*pi,smooth,samples=200] plot ({deg( \x) }:{(2-0.6*(cos(deg(3*\x)))^2)/1.73});
			\foreach \x in {1, ..., \xmax}
			{
				\draw[->]({deg( 2*pi*\x/(\xmax))}: {(2-0.6*(cos(deg(3*2*pi*\x/(\xmax))))^2)/1.73}) -- + ({deg( 2*pi*\x/(\xmax))}:{-(1-(((2-0.6*(cos(deg(3*2*pi*\x/(\xmax))))^2)/1.73)^(0.5)-1)^2)*0.25});
			}
		\end{tikzpicture}
	\end{subfigure}
	\caption{A set of finite perimeter $F$ and a ball $B$ of the same volume (left). 
	To measure the closeness of the two, we define the quantitative calibration $\xi$ (center); then we integrate $\xi$ (against the measure theoretic normal $\n_{\partial^\ast F}$) along the reduced boundary $\partial^\ast F$ (right).}
	\label{fig:calibration}
\end{figure}

\begin{theorem}
\label{theo:globalRegime}
Let $d \geq 2$. We define a vector field
$\xi \in W^{1,\infty}(\Rd;\Rd)$ by 
\begin{align}
\label{eq:quantCalibration2}
\xi(x) :=  \max\big\{1 - \big(1 {-} |x|\big)^2, 0\big\} \Big(-\frac{x}{|x|}\Big)
\end{align}
and call it a \emph{quantitative calibration of~$B_1$}.
For a set of finite perimeter~$F \subset \Rd$, we define the associated
relative energy $E_{rel}[F|\xi]$ by 
\begin{align}
\label{def:relativeEnergy}
E_{rel}[F|\xi] := \int_{\partial^*F} 1 - \n_{\partial^*F} \cdot \xi \dH.
\end{align}

Then, there exists a constant $C=C(d) \in (1,\infty)$ 
such that for all sets of finite 
perimeter~$F \subset \Rd$ satisfying
\begin{align}
\label{eq:volumeConstraint1}
\mathcal{L}^d(F) &= \mathcal{L}^d(B_1) =: \omega_d
\end{align}
it holds
\begin{align}
\frac{1}{C} \inf_{x_0 \in \Rd} E_{rel}[F{-}x_0|\xi] \leq \mathcal{H}^{d-1}(\partial^* F) - \mathcal{H}^{d-1}(\partial B_1).
\end{align}
\end{theorem}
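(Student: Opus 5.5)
We plan to derive Theorem~\ref{theo:globalRegime} from the perturbative result, Theorem~\ref{theo:pertRegime}, stated in the introduction. That result gives $(1-\gamma)\tfrac12 E_{rel}[F|\xi]\le P(F)-P(B_1)$ for sets with $|F|=\omega_d$, barycenter at the origin, and $E_{rel}[F|\xi]\le\delta(\gamma,d)$. We fix $\gamma=\tfrac12$ and write $\delta:=\delta(\tfrac12,d)$. The argument splits into a perturbative case and a global case, joined by a contradiction argument.

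\emph{Reduction.} It suffices to show two things. First, there exists $\eta>0$ such that $P(F)-P(B_1)\ge\eta$ whenever $|F|=\omega_d$ and $\inf_{x_0}E_{rel}[F-x_0|\xi]\ge\delta$. Second, translation to the barycenter is admissible in the small case. Two preliminary bounds are needed.

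\begin{itemize}
\item $|\xi|\le 1$, so $E_{rel}[F|\xi]\le 2P(F)$. Hence if $P(F)-P(B_1)\ge 1$, the theorem holds with $C=2(1+P(B_1))$. We may therefore restrict to $P(F)\le P(B_1)+1$.
\item $\mathrm{div}\,\xi$ is bounded, so $E_{rel}$ is controlled by $P(F)$ and is finite.
\end{itemize}

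If $E_{rel}[F-x_0|\xi]<\delta$ for the barycenter $x_0$ of $F$ (when $F$ is bounded, or more generally has finite first moment), Theorem~\ref{theo:pertRegime} applies directly.

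\emph{Contradiction argument.} Suppose there are sets $F_k$ with $|F_k|=\omega_d$, $P(F_k)\le P(B_1)+1$ and $P(F_k)-P(B_1)\to0$. Suppose moreover that for every $k$, $E_{rel}[F_k-x_0|\xi]\ge\delta$ for the barycenter $x_0$ and for all nearby translates. We aim to contradict this. By the concentration-compactness form of the isoperimetric inequality, minimizing sequences of the perimeter at fixed volume are precompact up to translation. Concretely, one first shows that a vanishing or dichotomy scenario would give a perimeter strictly larger than $P(B_1)$ in the limit, by strict subadditivity of $m\mapsto c\,m^{(d-1)/d}$.

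Hence, after translations $y_k$, $\chi_{F_k-y_k}\to\chi_{F_\infty}$ in $L^1$ with $|F_\infty|=\omega_d$. Lower semicontinuity gives $P(F_\infty)\le P(B_1)$, so $F_\infty$ is a ball of radius $1$ by the equality case. Translating once more, we may take $F_\infty=B_1$. Since $P(F_k)\to P(B_1)=P(F_\infty)$, Reshetnyak's continuity theorem yields convergence of $\int_{\partial^*F_k}\n\cdot\xi\dH$ to $\int_{\partial B_1}\n\cdot\xi\dH=P(B_1)$. This uses that $\xi$ is continuous and bounded, so continuity holds in the strict-convergence sense, with care at infinity handled by tightness. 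Therefore $E_{rel}[F_k-y_k|\xi]\to0$.

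It remains to pass from the translations $y_k$ to the barycenters. The barycenters of $F_k-y_k$ tend to $0$; $L^1$-convergence together with tightness bounds the mass far away. Continuity of $x_0\mapsto E_{rel}[F-x_0|\xi]$, uniform in $k$ because $\xi$ is Lipschitz and $P(F_k)$ is bounded, then gives $E_{rel}<\delta$ at the barycenters for large $k$. Theorem~\ref{theo:pertRegime} now applies, and the resulting inequality contradicts the assumption that these values stay $\ge\delta$.

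So either the deficit is at least some $\eta>0$, in which case $E_{rel}\le 2P(F)\le 2(P(B_1)+1)\le \tfrac{2(P(B_1)+1)}{\eta}\,(P(F)-P(B_1))$, or the perturbative bound holds with constant $4$. Taking the larger of the two constants gives $C$.

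\emph{Main obstacle.} The delicate step is the compactness of the sequence and the control of barycenters. An unbounded $F_k$ with thin far-away pieces can shift the barycenter while contributing little perimeter, or may even lack a finite first moment. We would first try to avoid this by cutting: using the deficit, remove far pieces at the cost of a small perimeter loss and rescale to restore volume. This is done via a Fusco--Maggi--Pratelli-type truncation lemma, which shows that such sets are within controlled deficit of bounded ones. Alternatively, one could apply Theorem~\ref{theo:pertRegime} to a truncated set and compare energies.
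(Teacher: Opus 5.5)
\textbf{There is a genuine gap: the step ``the barycenters of $F_k-y_k$ tend to $0$'' is false.} $L^1$ convergence and tightness control how much \emph{mass} lies far away, not the first moment. Take $F_k=B_{r_k}\cup B_{\rho_k}(3\rho_k^{-d}e_1)$ with $r_k^d+\rho_k^d=1$ and $\rho_k\to0$.
\begin{itemize}
\item Every $F_k$ is bounded, its deficit is at most $d\omega_d\rho_k^{d-1}\to0$, and $F_k\to B_1$ in $L^1$.
\item Yet the barycenter of $F_k$ is $3e_1$ for every $k$.
\item Since $E_{rel}[G|\xi]=\mathcal{H}^{d-1}(\partial^*G)+\int_G\nabla\cdot\xi\dx$ and $\xi$ vanishes outside $B_2$, you get $E_{rel}[F_k-3e_1|\xi]\to E_{rel}[B_1-3e_1|\xi]=\mathcal{H}^{d-1}(\partial B_1)$.
\end{itemize}
So your second claim, ``translation to the barycenter is admissible in the small case'', fails. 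So does your closing dichotomy: either the deficit is at least $\eta$, or $E_{rel}$ at the barycenter is below the perturbative threshold. It fails even among bounded sets.

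Relatedly, you have misquoted Theorem~\ref{theo:pertRegime}. It assumes $F\subset B_R$, and its threshold is $\eps(d,R)$. So it does not ``apply directly'' to a bounded set, or to a set with finite first moment, with a threshold independent of $R$. In the paper, $R$ enters through the affine correction $L_{a,\vec{b}}$, which weights far-away mass by $|x|$.

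\textbf{What is missing is the paper's first step.} Before any compactness argument, the paper reduces to sets contained in one \emph{fixed} ball $B_R$. Via the Fusco--Maggi--Pratelli/Fusco--Julin truncation one finds $\tilde F\subset F$ with $|F\setminus\tilde F|\lesssim \mathcal{H}^{d-1}(\partial^*F)-\mathcal{H}^{d-1}(\partial B_1)$, which is then rescaled. One must then check that $\inf_{x_0}E_{rel}[\,\cdot-x_0|\xi]$ changes by at most a constant times the deficit. This follows from the identity above. At a minimizing center $\tilde x$ for $\tilde F$,
\[
E_{rel}[F-\tilde x|\xi]-E_{rel}[\tilde F-\tilde x|\xi]\le \mathcal{H}^{d-1}(\partial^*F)-\mathcal{H}^{d-1}(\partial^*\tilde F)+\|\nabla\cdot\xi\|_{L^\infty}|F\setminus\tilde F|,
\]
and the isoperimetric inequality bounds the right-hand side by the deficit of $F$.

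You mention truncation only as a fallback in your last paragraph. You never supply this comparison of relative energies, and your main line does not use it; in the example above it would simply recenter at the barycenter of the truncated set, namely $0$.

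Once the reduction is in place, your compactness step becomes elementary and essentially matches the paper's second step:
\begin{itemize}
\item no concentration compactness is needed;
\item the barycenters converge because all sets live in $B_R$;
\item $E_{rel}\to0$ because it is perimeter plus a bulk term that is continuous under $L^1$ convergence;
\item Theorem~\ref{theo:pertRegime} with $2R$ then closes the argument.
\end{itemize}
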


The main contribution of the present paper consists of the following ``Fuglede-type''
result. The novelty lies in the formulation of the allowed perturbation: whereas
Fuglede's classical result~\cite{Fuglede86} assumes $C^1$-closeness of the 
competitor interface to the unit sphere, we only require smallness of excess~\eqref{def:relativeEnergy} 
with respect to the quantitative calibration~$\xi$ of~$B_1$, see~\eqref{eq:quantCalibration2}. We emphasize 
that in our version the same measure of distance is used to quantify both the 
perturbative nature of the competitor and the resulting stability in the isoperimetric problem.
This is in strong contrast to Fuglede's work transferring $C^1$-smallness of the
height function parametrizing the interface of the perturbative competitor to
a lower bound of the isoperimetric deficit in terms of the $W^{1,2}$-norm of the height function.
Apart from being interesting in its own right, this allows us to completely bypass
the usage of any type of regularity theory to upgrade the perturbative statement of Theorem~\ref{theo:pertRegime}
into the quantitative isoperimetric inequality of Theorem~\ref{theo:globalRegime}:
indeed, excess~\eqref{def:relativeEnergy} is stable under strict convergence
of sets of finite perimeter as the correction to the perimeter of the competitor
is a bulk term.

\begin{theorem}
\label{theo:pertRegime}
Let $d \geq 2$ and $R \in [2,\infty)$. 
For all $\gamma \in (0,1)$ one may choose $\varepsilon = \varepsilon(d,R) \ll \gamma$ 
such that for all sets of finite 
perimeter~$F \subset \Rd$ satisfying
\begin{align}
\label{eq:volumeConstraint2}
\mathcal{L}^d(F) &= \mathcal{L}^d(B_1) = \omega_d,
\\
\label{eq:barycenterConstraint2}
\int_{F} x \,dx &= 0,
\\
\label{eq:diameterConstraint}
F &\subset B_R,
\end{align}
it holds:

If the relative energy $E_{rel}[F|\xi]$ defined by~\eqref{def:relativeEnergy} is small in the sense that
\begin{align}
\label{eq:smallRelEnergy}
E_{rel}[F|\xi] \leq \eps,
\end{align}
then
\begin{align}
\label{eq:quantIsoperInequ2}
(1{-}\gamma)\frac{1}{2} E_{rel}[F|\xi]
&\leq \mathcal{H}^{d-1}(\partial^* F) - \mathcal{H}^{d-1}(\partial B_1).
\end{align}
The constant~$(\frac{1}{2})^{-}$ is sharp with respect to the given $\xi$.
\end{theorem}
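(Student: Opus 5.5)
The starting point is the calibration identity. Since $\xi\in W^{1,\infty}$ and $F\subset B_R$, the divergence theorem with the interior normal gives
\[
\int_{\partial^*F}\n_{\partial^*F}\cdot\xi\dH=-\int_F\nabla\cdot\xi\dx,
\qquad
\int_{\partial B_1}\n_{\partial B_1}\cdot\xi\dH=\mathcal{H}^{d-1}(\partial B_1)=-\int_{B_1}\nabla\cdot\xi\dx .
\]
Subtracting yields
\[
\mathcal{H}^{d-1}(\partial^*F)-\mathcal{H}^{d-1}(\partial B_1)=E_{rel}[F|\xi]+\int_{F}\nabla\cdot\xi\dx-\int_{B_1}\nabla\cdot\xi\dx .
\]
With $\eta(r)=\max\{1-(1-r)^2,0\}$ one has $-\nabla\cdot\xi=\eta'(r)+(d-1)\eta(r)/r$. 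Near $r=1$ this equals $(d-1)+\big(2-(d-1)\big)(r-1)+O((r-1)^2)$, the leading linear term coming from $\eta'(r)=2(1-r)$. The volume constraint~\eqref{eq:volumeConstraint2} kills the constant $(d-1)$. Hence the claim~\eqref{eq:quantIsoperInequ2} is equivalent to showing that the bulk term
\[
\int_{B_1}(-\nabla\cdot\xi)-\int_F(-\nabla\cdot\xi)
\]
is bounded below by $-(1+\gamma)\tfrac12E_{rel}[F|\xi]$. This is a second-order expansion around $\partial B_1$, and the sharp constant $\tfrac12$ should appear exactly as in Fuglede's spectral argument.

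The first reduction handles the parts of $F$ far from the sphere. The quantity $1-\n\cdot\xi\geq 1-\eta(|x|)\geq (1-|x|)^2\wedge1$ penalizes boundary far from $\partial B_1$. Smallness~\eqref{eq:smallRelEnergy} therefore gives small perimeter of $\partial^*F$ outside a thin annulus $B_{1+\delta}\setminus B_{1-\delta}$, and by the isoperimetric inequality also small volume there. The tangential defect $1-\n\cdot(-x/|x|)$ controls the tilt. Slicing along rays in polar coordinates, and using coarea on the annulus, I would select for almost every direction $\theta$ whether the ray meets $\partial^*F$ essentially once. On the bad set of directions (multiple crossings or large tilt), the extra perimeter is $\gtrsim$ the measure of that set, so these directions are controlled by $E_{rel}$. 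Replacing $F$ by the subgraph $\{r<1+u(\theta)\}$ with $u\in BV(\mathbb S^{d-1})$, $|u|\le\delta$, changes both sides of the identity by $o(1)E_{rel}$. Here $u$ is defined on good directions by the crossing point and extended suitably. The barycenter and volume constraints~\eqref{eq:barycenterConstraint2} are restored by a small dilation/translation with cost $o(1)E_{rel}$.

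For a graph $u$ the energy expands as
\[
E_{rel}\approx\int\Big(\tfrac12|\nabla_\theta u|^2+u^2+\ldots\Big),
\qquad
\text{deficit}\approx\tfrac12\int\big(|\nabla u|^2-(d-1)u^2\big),
\]
up to cubic errors in $\delta$ and exact lower-order terms. Then I would mollify $u$ on the sphere, using the $BV$-to-smooth strict approximation so that both functionals converge. Finally I decompose into spherical harmonics. Volume removes degree $0$ to leading order and the barycenter removes degree $1$, so $\int|\nabla u|^2\ge 2d\int u^2$. This gives deficit $\geq(1-\gamma)\tfrac12E_{rel}$ once $\delta$ is small, and the main terms are exactly balanced to show $\tfrac12$. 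For sharpness I would test with $u=s Y_k$ with $k\to\infty$: both sides then behave like $\tfrac12\int|\nabla u|^2$ plus lower-order terms, so the ratio tends to $\tfrac12$.

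The main obstacle is the slicing step. The difficulty is to show that non-graphical parts of $\partial^*F$, which can carry little perimeter but many thin fingers, contribute to the bulk term only at a rate dominated by the $E_{rel}$ they carry, with a factor $o(1)$ rather than $O(1)$. My first attempt would be a pointwise estimate along each ray. The bulk integrand $-\nabla\cdot\xi-(d-1)$ is $O(|r-1|)$. Each additional pair of crossings at radius $r$ costs in $E_{rel}$ at least $2(1-\n\cdot\xi)\gtrsim$ const on rays with steep crossings, or $(r-1)^2$-type contributions combined with the Jacobian otherwise. Together with $|r-1|\le\delta$, this should yield the gain $\delta$.
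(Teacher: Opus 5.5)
Your architecture (calibration identity, ray-wise slicing to a $BV$ graph, smoothing, spectral gap) matches the paper's. Your ray-wise handling of multiple crossings is essentially its bound $|I|\lesssim\delta\,\mathcal{L}^{d-1}(\Theta_{\text{slope}})\lesssim\delta E_{rel}[F|\xi]$.

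The genuine gap is the passage from the $BV$ graph to the spectral argument. The expansion $E_{rel}\approx\int u^2+\frac12|\nabla u|^2$ is valid only for small slopes. Your $u$ is merely $BV$: it has steep parts and jumps, where the excess grows only linearly in the slope ($\sqrt{1+|\nabla u|^2}-1\ll\frac12|\nabla u|^2$ for large slopes). A strict approximation $u_n\to u$ does make excess and perimeter converge. But then $\int|\nabla u_n|^2\to\infty$ whenever $D^su\neq0$, the $u_n$ are not $W^{1,\infty}$-small, and Fuglede's computation does not apply to them. The sharp inequality for smooth graphs of small height but large slope is exactly what remains to be shown. What is needed is the one-sided sharp bound
\[
\|v\|_{L^2}^2+\tfrac12\|\nabla v\|_{L^2}^2\le(1+\gamma)E_{rel}[F|\xi]
\]
for some smooth height $v$ whose bulk term differs from that of $F$ by $o(E_{rel}[F|\xi])$.

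The paper obtains this by mollifying at the $E_{rel}$-dependent scale $\lambda^{2(d-1)}=M^2E_{rel}[F|\xi]$ and splitting by regime:
\begin{itemize}
\item Where the slope is at most $c$, the excess controls $|\nabla h|^2$ with constant $2\sqrt{1+c^2}+O(\delta)$.
\item Elsewhere (steep single crossings, multiple crossings, $D^sh$), it controls $Dh$ only in total variation, with mass $O(E_{rel})$. Mollification at scale $\lambda$ converts this into an $L^2$ contribution of order $E_{rel}/M^2$, plus a global Lipschitz bound.
\item Meanwhile $\lambda\to0$ keeps the mollification error in the bulk term $o(E_{rel})$.
\end{itemize}

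Relatedly, your claim that passing to the graph changes \emph{both} sides by $o(1)E_{rel}$ is false. A thin tentacle outside the tube can carry a fixed fraction of the excess and disappears in the graph. The paper keeps $E_{rel}[F|\xi]$ itself in the identity and only transfers the bulk term.

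There are two further errors.

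First, your identity has the wrong sign. The correct identity is $\mathcal{H}^{d-1}(\partial^*F)-\mathcal{H}^{d-1}(\partial B_1)=E_{rel}[F|\xi]-\int(\chi_F-\chi_{B_1})\nabla\cdot\xi\dx$. Moreover, exactly $\nabla\cdot\xi+(d-1)=-(d+1)s$ with $s=1-|x|$, so the coefficient of $r-1$ in $-\nabla\cdot\xi$ is $-(d+1)$, not $3-d$. With your sign, the bulk term would be nonnegative for $F\subset B_2$, and the inequality would hold trivially with constant $1$. The true bulk term is $\approx-\frac{d+1}{2}\int u^2$; your later graph expansions are consistent with this.

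Second, your sharpness test fails. For $u=tY_k$ with $t\to0$, the ratio of deficit to excess tends to $\frac{k(k+d-2)-(d-1)}{k(k+d-2)+2}$, which goes to $1$ as $k\to\infty$. The constant $\frac12$ is attained by degree-$2$ harmonics, where $k(k+d-2)=2d$; this is $C_2=1$ in the paper's spectral computation.
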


The coercivity of the excess~\eqref{def:relativeEnergy} entails
that the quantitative isoperimetric inequalities obtained
by Fusco, Maggi, and Pratelli~\cite{Fusco2008} in terms of Fraenkel asymmetry
or Fusco and Julin~\cite{FuscoJulin14} in terms of ``classical excess'' 
are simple corollaries of Theorem~\ref{theo:globalRegime}.

\begin{lemma}
\label{lem:coercivity}
Let $d \geq 2$. There exists a constant $C=C(d) \in (1,\infty)$ such that
for all sets of finite perimeter~$F \subset \Rd$ satisfying~\eqref{eq:volumeConstraint1} it holds
\begin{align}
\label{eq:controlFraenkel}
\big|\mathcal{L}^d(F \Delta B_1)\big|^2 \leq C E_{rel}[F|\xi]
\end{align}
and
\begin{align}
\label{eq:tiltExcessControl}
\int_{\partial^*F} \frac{1}{2}\Big|\n_{\partial^*F}(x)
- \Big(-\frac{x}{|x|}\Big)\Big|^2 \dH(x)
\leq C E_{rel}[F|\xi],
\end{align}
where the relative energy $E_{rel}[F|\xi]$ is defined by~\eqref{def:relativeEnergy}.
\end{lemma}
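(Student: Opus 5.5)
The plan is to treat the two estimates separately. The tilt bound \eqref{eq:tiltExcessControl} will follow from a pointwise inequality on the integrand. The Fraenkel bound \eqref{eq:controlFraenkel} will follow from rewriting $E_{rel}[F|\xi]$ via the divergence theorem and then using a bathtub argument. Throughout I write $\eta(x):=\max\{1-(1-|x|)^2,0\}\in[0,1]$ and $\nu(x):=-x/|x|$, so that $\xi=\eta\nu$.

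\emph{Tilt excess.} For unit vectors $\n,\nu$ we have $1-\n\cdot\nu=\frac12|\n-\nu|^2$. Hence pointwise on $\partial^*F$
\begin{align*}
1-\n_{\partial^*F}\cdot\xi=(1-\eta)+\eta\,\tfrac12|\n_{\partial^*F}-\nu|^2 ,
\end{align*}
and both summands are nonnegative. Where $\eta\geq\frac12$, the second summand dominates $\frac14|\n-\nu|^2$. Where $\eta<\frac12$, we use $\frac12|\n-\nu|^2\leq 2\leq 4(1-\eta)$. In either case $\frac12|\n_{\partial^*F}-\nu|^2\leq 4(1-\n_{\partial^*F}\cdot\xi)$, and integrating over $\partial^*F$ gives \eqref{eq:tiltExcessControl}. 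No volume constraint is needed for this part.

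\emph{Fraenkel asymmetry.} Since $\n_{\partial^*F}$ is the interior normal and $\xi\in W^{1,\infty}$, the Gauss--Green formula gives $\int_{\partial^*F}\n_{\partial^*F}\cdot\xi\dH=-\int_F\operatorname{div}\xi\dx$. Therefore $E_{rel}[F|\xi]=\mathcal{H}^{d-1}(\partial^*F)+\int_F g\dx$ with $g:=\operatorname{div}\xi$. As $\xi=\n_{\partial B_1}$ on $\partial B_1$, we have $E_{rel}[B_1|\xi]=0$. Subtracting this identity for $B_1$ and using the classical isoperimetric inequality \eqref{eq:Isoperimetric Inequality}, we obtain
\begin{align*}
E_{rel}[F|\xi]\geq\int_{\Rd}(\chi_F-\chi_{B_1})\,g\dx=\int_{\Rd}(\chi_F-\chi_{B_1})\,(g-g|_{\partial B_1})\dx .
\end{align*}
The second equality uses $\mathcal{L}^d(F)=\mathcal{L}^d(B_1)$. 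A direct computation in the radial variable $r=|x|$ gives $g=-(\eta'+(d-1)\eta/r)=-2d+(d+1)r$ for $r<2$ and $g=0$ for $r>2$. In particular $g|_{\partial B_1}=1-d$, and there is a constant $c=c(d)>0$ with $(g-g|_{\partial B_1})\operatorname{sign}(r-1)\geq c\min\{|r-1|,1\}$. This holds even across the jump of $g$ at $r=2$, because $0-(1-d)=d-1>0$. Consequently the integrand is nonnegative on both $F\setminus B_1$ and $B_1\setminus F$, and
\begin{align*}
E_{rel}[F|\xi]\geq c\int_{F\Delta B_1}\min\{||x|-1|,1\}\dx .
\end{align*}

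\emph{Bathtub step.} Set $m:=\mathcal{L}^d(F\Delta B_1)$; the equal-volume constraint gives $\mathcal{L}^d(F\setminus B_1)=\mathcal{L}^d(B_1\setminus F)=m/2$. The weight $\min\{||x|-1|,1\}$ is radial and grows linearly away from $\partial B_1$, so among sets of a given measure its integral is minimized by annuli adjacent to $\partial B_1$. Elementary layer-cake estimates then show that the integral of the weight over a set of measure $m/2$ is at least $c'\min\{m^2,m\}$. Finally $m\leq 2\omega_d$, so $\min\{m^2,m\}\gtrsim_d m^2$, which yields \eqref{eq:controlFraenkel}.

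The main point of care is the sign and jump bookkeeping for $g$ at $r=2$, where $\xi$ is only Lipschitz. One must check that the divergence theorem still applies, which it does since $\xi\in W^{1,\infty}$ and the set of finite perimeter may be truncated by large balls. One must also check that $g-g|_{\partial B_1}$ keeps the correct sign outside $B_2$.
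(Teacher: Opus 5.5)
Your proof is correct. For the tilt bound \eqref{eq:tiltExcessControl} it is the same pointwise argument as the paper's. The paper splits according to the sign of $\n_{\partial^*F}\cdot(-x/|x|)$ and gets the constant $2$; your split $\eta\geq\frac12$ versus $\eta<\frac12$ gives $4$.

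For the Fraenkel bound \eqref{eq:controlFraenkel} your route is genuinely different.
\begin{itemize}
\item The paper inserts the $BV$ graph approximation $\chi_h$ from \eqref{eq:BVgraphApprox}. It controls $\|\chi_F-\chi_h\|_{L^1}$ by the slicing argument of Lemma~\ref{lem:BVgraphApprox}, using relative isoperimetric inequalities away from $\partial B_1$ and $\mathcal{L}^{d-1}(\Theta_{\text{slope}})\lesssim E_{rel}[F|\xi]$. It controls $\|\chi_h-\chi_{B_1}\|_{L^1}\lesssim\|h\|_{L^2}$ through the height estimate \eqref{eq:L2estimateHeight}, that is, through the term $e_{height}=s^2$.
\item You instead read the relative energy equality \eqref{eq:relativeEnergyEquality} (with $a=0$, $\vec b=0$) backwards and drop the nonnegative deficit by the sharp isoperimetric inequality. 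You then use that $\nabla\cdot\xi+(d-1)$ equals $(d+1)(|x|-1)$ on $B_2$ (cf.~\eqref{eq:VolumeIntegrand}) and equals $d-1$ outside $B_2$. Hence $(\chi_F-\chi_{B_1})(\nabla\cdot\xi+(d-1))\geq c\min\{||x|-1|,1\}\,\chi_{F\Delta B_1}$ pointwise. The bathtub step is then elementary; the inner part $B_1\setminus F$ alone already gives $\gtrsim m^2$.
\end{itemize}

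Your version has several advantages. It is a few lines long. It works globally, with no smallness of $E_{rel}[F|\xi]$ needed; the paper's slicing estimates are set up for $E_{rel}[F|\xi]\le\eps$, and the complementary regime is trivial only because $\mathcal{L}^d(F\Delta B_1)\le 2\omega_d$. It gives explicit constants. It even yields the stronger inequality $E_{rel}[F|\xi]\geq \mathcal{H}^{d-1}(\partial^*F)-\mathcal{H}^{d-1}(\partial B_1)+c\,\mathcal{L}^d(F\Delta B_1)^2$.

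The price is that it rests on the minimality of the ball. This is not circular, since the paper itself invokes the qualitative isoperimetric inequality in the proof of Theorem~\ref{theo:globalRegime}. The paper's route, by contrast, shows that the relative energy controls the Fraenkel asymmetry on its own. It uses only non-sharp relative isoperimetric inequalities and the structure of $\xi$. That is the more robust statement if one wants to transfer the method to settings where minimality of the calibrated configuration is not known in advance.
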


We finally state the entry point of our analysis consisting of an identity (!)
relating the difference in perimeters to the excess and a bulk term. We
call this identity the relative energy equality and it is the very reason
why we interpret our approach as a calibration-type argument. The main differences
to the usual calibration computation are two-fold and interrelated: i) we do
not estimate from below when passing from perimeter of the competitor to
its approximation in terms of flux, and ii) we do not require the resulting
bulk term to vanish. Rather to the contrary, our strategy relies on i)
making the error between perimeter of the competitor and its flux approximation
as coercive as possible so that ii) the appearing bulk term (actually having the
dangerous sign for perturbative graph competitors) can be absorbed into a fraction
of the excess.      

\begin{lemma}
\label{lem:relativeEnergyEquality}
Let $d \geq 2$, and for given $a \in\Rd[]$ and~$\vec{b} \in \Rd$,
let $L_{a,\vec{b}}\colon\Rd\to\Rd[]$ be
the affine map $L_{a,\vec{b}}(x) := a + \vec{b}\cdot x$. 
Let $F \subset \Rd$ be a set of finite perimeter satisfying
the volume constraint~\eqref{eq:volumeConstraint2}
and the barycenter constraint~\eqref{eq:barycenterConstraint2}.
Then
\begin{equation}
\label{eq:relativeEnergyEquality}
\begin{aligned}
\mathcal{H}^{d-1}(\partial^* F) - \mathcal{H}^{d-1}(\partial B_1) 
&= E_{rel}[F|\xi] 
- \int_{\Rd} (\chi_F - \chi_{B_1}) \big(\nabla\cdot \xi + (d {-} 1)\big) \dx
\\&~~~
-\int_{\Rd} (\chi_F - \chi_{B_1}) L_{a,\vec{b}} \dx,
\end{aligned}
\end{equation}
where the relative energy $E_{rel}[F|\xi]$ is defined by~\eqref{def:relativeEnergy}.
\end{lemma}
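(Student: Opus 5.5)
The identity should follow from the divergence theorem applied to $\xi$, together with the observation that the affine term integrates to zero under the two constraints. The plan is to rewrite $E_{rel}[F|\xi]$ via Gauss--Green for $F$ and for $B_1$, and then collect terms.

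\emph{Step 1 (affine term).} Since $L_{a,\vec b}(x)=a+\vec b\cdot x$, we have
\[
\int_{\Rd}(\chi_F-\chi_{B_1})L_{a,\vec b}\dx
= a\big(\mathcal{L}^d(F)-\omega_d\big)+\vec b\cdot\Big(\int_F x\dx-\int_{B_1}x\dx\Big).
\]
By~\eqref{eq:volumeConstraint2}, \eqref{eq:barycenterConstraint2} and the symmetry of $B_1$, this vanishes. It therefore suffices to prove the identity without the last term.

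\emph{Step 2 (Gauss--Green on $F$).} Note that $\xi\in W^{1,\infty}$ is Lipschitz. This includes the origin, where $\xi$ vanishes because $1-(1-|x|)^2\to 0$, and the sphere $|x|=2$, where the maximum switches. Moreover $\xi$ has compact support in $\overline{B_2}$. The De Giorgi structure theorem gives, with interior normal $\n_{\partial^*F}$,
\[
\int_{\partial^*F}\n_{\partial^*F}\cdot\xi\dH=-\int_F\nabla\cdot\xi\dx .
\]
This is first valid for $C^1_c$ fields, and extends to Lipschitz compactly supported $\xi$ by mollification: $\xi_\delta\to\xi$ uniformly, and $\nabla\cdot\xi_\delta\to\nabla\cdot\xi$ in $L^1_{loc}$. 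Hence
\[
E_{rel}[F|\xi]=\mathcal{H}^{d-1}(\partial^*F)+\int_F\nabla\cdot\xi\dx .
\]

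\emph{Step 3 (same for $B_1$).} On $\partial B_1$ we have $\xi=-x/|x|=\n_{\partial B_1}$, so $E_{rel}[B_1|\xi]=0$. This gives $\mathcal{H}^{d-1}(\partial B_1)=-\int_{B_1}\nabla\cdot\xi\dx$. Subtracting,
\[
\mathcal{H}^{d-1}(\partial^*F)-\mathcal{H}^{d-1}(\partial B_1)
=E_{rel}[F|\xi]-\int_{\Rd}(\chi_F-\chi_{B_1})\nabla\cdot\xi\dx .
\]
Finally, $\int(\chi_F-\chi_{B_1})(d-1)\dx=0$ by the volume constraint, so this constant may be inserted freely. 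Combined with Step 1, this yields~\eqref{eq:relativeEnergyEquality}.

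The only point needing any care is the justification of Gauss--Green for the merely Lipschitz field $\xi$. I expect this to be routine via the mollification argument above, since $|D\chi_F|$ is a finite measure and $\xi_\delta\to\xi$ uniformly. The rest is bookkeeping.
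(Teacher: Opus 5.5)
Your proposal is correct and follows essentially the same route as the paper. You apply Gauss--Green to $\xi$ on $F$ and on $B_1$, using that $\xi=\n_{\partial B_1}$ on $\partial B_1$. Then the constant $(d{-}1)$ and the affine term $L_{a,\vec b}$ drop out by the volume and barycenter constraints. Your mollification remark justifying the divergence theorem for the Lipschitz, compactly supported field $\xi$ is a detail the paper leaves implicit.
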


\noindent
\textbf{Structure of the paper.}
In Section~\ref{subsec:relEnergyEquality}, we present the short proof of the
relative energy equality~\eqref{eq:relativeEnergyEquality}. In Section~\ref{subsec:classicalFuglede},
we use it to give a proof of Fuglede's classical result to highlight the spirit of our methodology. 
Sections~\ref{subsubsec:coercivity}--\ref{subsec:proofTheoremPertRegime} contain the proof of
our main contribution, Theorem~\ref{theo:pertRegime}. The short proofs of Theorem~\ref{theo:globalRegime}
and Lemma~\ref{lem:coercivity} are presented in Sections~\ref{subsec:proofMainTheorem} 
and~\ref{subsec:proofCoercivity}, respectively.

\section{Another look at Fuglede's result}
\label{subsec:relEnergyEquality}

\subsection{The relative energy equality: Proof of Lemma~\ref{lem:relativeEnergyEquality}} 
We compute
\begin{equation}
\begin{aligned}
\mathcal{H}^{d-1}(\partial^* F) &=
\int_{\partial^*F} 1 - \n_{\partial^*F}\cdot\xi \dH  + \int_{\partial^*F} \n_{\partial^*F}\cdot\xi \dH
\\&
= E_{rel}[F|\xi] - \int_{\Rd} \chi_{F} \nabla\cdot\xi \dx
\\&
= E_{rel}[F|\xi] - \int_{\Rd} (\chi_{F} {-} \chi_{B_1}) \nabla\cdot\xi \dx
+ \int_{\partial B_1} \n_{\partial B_1} \cdot \xi \dH
\\&
= E_{rel}[F|\xi] - \int_{\Rd} (\chi_{F} {-} \chi_{B_1}) \nabla\cdot\xi \dx
+ \mathcal{H}^{d-1}(\partial B_1).
\end{aligned}
\end{equation}
This, however, is equivalent to~\eqref{eq:relativeEnergyEquality} as a 
consequence of the volume constraint~\eqref{eq:volumeConstraint2} and the centering of the 
barycenter~\eqref{eq:barycenterConstraint2}. \qed

\subsection{A proof of Fuglede's result by the relative energy equality}
\label{subsec:classicalFuglede}
In order to introduce the reader to the relative energy~\eqref{def:relativeEnergy},
we revisit Fuglede's classical result based on the relative energy equality~\eqref{eq:relativeEnergyEquality}.
To this end, we consider a set of finite perimeter~$F \subset \Rd$ 
satisfying~\eqref{eq:volumeConstraint2}--\eqref{eq:barycenterConstraint2} such that
the reduced boundary of~$F$ is given by a $W^{1,\infty}(\partial B_1)$ height function
$h \colon \partial B_1 \to \Rd[]$:
\begin{align}
\label{eq:FugledeAux1b}
\partial^*F = \big\{x + h(x)\n_{\partial B_1}(x) \colon x \in \partial B_1 \big\},
\end{align}
where $\n_{\partial B_1}(x) = -\frac{x}{|x|}$. We claim that for all $\gamma \in (0,1)$
there exists $\varepsilon \ll_\gamma 1$ such that $\|h\|_{W^{1,\infty}(\partial B_1)} \leq \eps$ implies
\begin{equation}
\begin{aligned}
\label{eq:FugledeAux2b}
\mathcal{H}^{d-1}(\partial^* F) - \mathcal{H}^{d-1}(\partial B_1)
&\geq (1{-}\gamma)\frac{1}{2} E_{rel}[F|\xi]
\\&
\geq (1{-}\gamma)^2 \frac{1}{2} \bigg(\int_{\partial B_1} h^2 + \frac{1}{2}|\nabla_{\partial B_1} h|^2 \dH\bigg).
\end{aligned}
\end{equation}

We start by expanding the relative energy. With the change of
variables $\Psi^h\colon \partial B_1 \to \partial^*F$,
$x \mapsto x - h(x) \n_{\partial B_1}(x)$, we obtain from the 
area formula
\begin{equation}
\begin{aligned}
\label{eq:FugledeAux3b}
E_{rel}[F|\xi]
&= \int_{\partial B_1} \frac{(1-h)^{d-1}}{\big|(\n_{\partial^*F}\circ\Psi^h)\cdot\n_{\partial B_1}\big|}
\big(1 - (\n_{\partial^*F}\cdot\xi)\circ\Psi^h \big) \dH
\\&
= \int_{\partial B_1} (1-h)^{d-1} \sqrt{1 + \Big(\frac{|\nabla_{\partial B_1} h|}{1-h}\Big)^2}
\big(1 - (\n_{\partial^*F}\cdot\xi)\circ\Psi^h \big) \dH.
\end{aligned}
\end{equation}
Next, we split the relative energy density into two parts. 
We define $\eta(x) := \max\{1 - (1 {-} |x|)^2, 0\}$;
in particular, for $\dist(x,\partial B_1)<1$ it holds $\eta(x) = 1 -  s^2(x)$, where $s:=s_{\partial B_1}$ denotes the signed distance to~$\partial B_1$
such that $\nabla s(x) = - \frac{x}{|x|}$ is the inward pointing normal
vector field along~$\partial B_1$. Recalling the definition~\eqref{eq:quantCalibration2}
of the quantitative calibration~$\xi$, we then decompose by adding zero
\begin{equation}
\begin{aligned}
\label{eq:FugledeAux4b}
1 - (\n_{\partial^*F}\cdot\xi)\circ\Psi^h
&= \big(1 - \eta\circ\Psi^h\big) + \eta\circ\Psi^h \big(1 - (\n_{\partial^*F}\circ\Psi^h)\cdot\n_{\partial B_1}\big) 
\\&
= h^2 + \eta\circ\Psi^h\big(1 - (\n_{\partial^*F}\circ\Psi^h)\cdot\n_{\partial B_1}\big).
\end{aligned}
\end{equation}
Taylor expanding the two resulting contributions in~\eqref{eq:FugledeAux3b} therefore yields:
For all $\gamma \in (0,1)$ there exists $\eps \ll_\gamma 1$ such that if $\|h\|_{W^{1,\infty}(\partial B_1)} \leq \eps$,
then
\begin{equation}
\begin{aligned}
\label{eq:FugledeAux5}
(1 {-} \gamma) \int_{\partial B_1} h^2 + \frac{1}{2}|\nabla_{\partial B_1} h|^2 \dH
&\leq E_{rel}[F|\xi].
\end{aligned}
\end{equation}

For the expansion of the remaining volume term, we first record the following computation
on $\{x\colon\dist(x,\partial B_1)<1\}$:
\begin{equation}
\label{eq:VolumeIntegrand}
\begin{aligned}
\nabla\cdot\xi + (d{-}1) &= -2s - \Big(1 {-} s^2\Big)\frac{d{-}1}{1-s}
+ (d{-}1)
= -(d{+}1)s.
\end{aligned}
\end{equation}
We thus obtain from an application of the coarea formula
\begin{equation}
\label{eq:FugledeAux6}
\begin{aligned}
- &\int_{\Rd} (\chi_{F} {-} \chi_{B_1}) \big(\nabla\cdot\xi + (d{-}1)\big) \dx
\\&
= -\int_{\partial B_1} \int_{0}^{h} (1{-}s)^{d-1}(d{+}1)s \,\mathrm{d}s\mathrm{d}\mathcal{H}^{d-1}.
\end{aligned}
\end{equation}
Hence, a Taylor expansion entails: For all $\gamma \in (0,1)$ there exists $\eps \ll_\gamma 1$ such 
that if $\|h\|_{W^{1,\infty}(\partial B_1)} \leq \eps$, then
\begin{equation}
\begin{aligned}
\label{eq:FugledeAux7}
-(1 {+} \gamma) \int_{\partial B_1} \frac{1}{2}(d{+}1) h^2 \dH
&\leq -\int_{\Rd} (\chi_{F} {-} \chi_{B_1}) \big(\nabla\cdot\xi + (d{-}1)\big) \dx. 
\end{aligned}
\end{equation}

With the expansions~\eqref{eq:FugledeAux5} and~\eqref{eq:FugledeAux7} in place,
we proceed by decomposing into spherical harmonics. Denoting by $\pi_k f$ the orthogonal projection
of $f \in L^2(\partial B_1)$ onto the subspace of spherical harmonics of degree~$k$, we obtain
\begin{equation}
\begin{aligned}
\label{eq:FugledeAux8}
\int_{\partial B_1} h^2 + \frac{1}{2}|\nabla_{\partial B_1} h|^2 \dH
= \sum_{k=0}^\infty \Big(1 + \frac{1}{2}k\big(k + (d{-}2)\big)\Big) \|\pi_k h\|^2_{L^2(\partial B_1)}
\end{aligned}
\end{equation}
as well as
\begin{equation}
\begin{aligned}
\label{eq:FugledeAux9}
- \int_{\partial B_1} \frac{1}{2}(d{+}1) h^2 \,d\mathcal{H}^{d-1}
= - \sum_{k=0}^\infty \frac{1}{2}(d{+}1) \|\pi_k h\|^2_{L^2(\partial B_1)}.
\end{aligned}
\end{equation}

Comparing coefficients, it follows that, as it should be, we have a spectral gap for modes $k \geq 2$,
whereas $k=0$ is unstable and $k=1$ is, to leading order, indeterminate. More precisely, for the stable modes it holds
\begin{equation}
\begin{aligned}
\label{eq:sharpSpectralGapStableModes}
&\sum_{k = 2}^\infty \frac{1}{2}(d{+}1) \|\pi_k h\|^2_{L^2(\partial B_1)}
\\&~~~
= \frac{1}{2} \sum_{k = 2}^\infty \frac{d{+}1}{1 + \frac{1}{2}k\big(k + (d{-}2)\big)} 
\Big(1 + \frac{1}{2}k\big(k + (d{-}2)\big)\Big) \|\pi_k h\|^2_{L^2(\partial B_1)}
\\&~~~
\leq \frac{1}{2} \sum_{k=2}^\infty \Big(1 + \frac{1}{2}k\big(k + (d{-}2)\big)\Big) \|\pi_k h\|^2_{L^2(\partial B_1)}
\end{aligned}
\end{equation}
since $C_k := \frac{d{+}1}{1 + \frac{1}{2}k(k + (d{-}2))} \leq C_2 = 1$ for $k \geq 2$.

The unstable and indeterminate modes may be corrected based on the last right hand side term of~\eqref{eq:relativeEnergyEquality}.
To be precise, fix constants $c_0 \in (0,\infty)$ as well as $c_1 \in (0,\infty)$ and choose
accordingly 
\begin{align}
\label{eq:choiceOfShifts}
a = c_0 \int_{\partial B_1} h \dH \in \Rd[] \quad\text{ and }\quad
\vec{b} = c_1\int_{\partial B_1} xh \dH \in \Rd
\end{align}
 such that
\begin{equation}
\begin{aligned}
\label{eq:FugledeAux10}
\int_{\partial B_1} a h + (\vec{b}\cdot x) h \dH
= c_0 \|\pi_0 h\|^2_{L^2(\partial B_1)} + c_1 \|\pi_1 h\|^2_{L^2(\partial B_1)}.
\end{aligned}
\end{equation}
A Taylor expansion then gives: For all $\gamma \in (0,1)$ there exists $\eps \ll_{\gamma,c_0,c_1} 1$ such 
that if $\|h\|_{W^{1,\infty}(\partial B_1)} \leq \eps$, then
\begin{equation}
\begin{aligned}
\label{eq:FugledeAux11}
&c_0 \|\pi_0 h\|^2_{L^2(\partial B_1)} + c_1 \|\pi_1 h\|^2_{L^2(\partial B_1)}
- \gamma E_{rel}[F|\xi]
\leq -\int_{\Rd} (\chi_F - \chi_{B_1}) L_{a,\vec{b}} \dx. 
\end{aligned}
\end{equation}

Inserting all this information into~\eqref{eq:relativeEnergyEquality}, we deduce that
for all $\gamma \in (0,1)$ there exists $\eps \ll_{\gamma,c_0,c_1} 1$ such 
that if $\|h\|_{W^{1,\infty}(\partial B_1)} \leq \eps$, then
\begin{equation}
\begin{aligned}
\label{eq:FugledeAux12}
\mathcal{H}^{d-1}(\partial^* F) - \mathcal{H}^{d-1}(\partial B_1) 
&\geq (1 {-} \gamma) E_{rel}[F|\xi]
\\&~~~
- \sum_{k\in\{0,1\}} \big((d{+}1)-c_k\big) \|\pi_kh\|_{L^2(\partial B_1)}^2
\\&~~~
- \frac{1}{2}(1{+}\gamma) \sum_{k=2}^\infty \Big(1 + \frac{1}{2}k\big(k + (d{-}2)\big)\Big) \|\pi_k h\|^2_{L^2(\partial B_1)}.
\end{aligned}
\end{equation}
A suitable choice of the constants $c_0 \in (0,\infty)$ and $c_1 \in (0,\infty)$ therefore allows to infer that
\begin{equation}
\begin{aligned}
\label{eq:FugledeAux13}
\mathcal{H}^{d-1}(\partial^* F) - \mathcal{H}^{d-1}(\partial B_1) 
&\geq (1 {-} \gamma) E_{rel}[F|\xi]
\\&~~~
- \frac{1}{2}(1{+}\gamma) \sum_{k=2}^\infty \Big(1 + \frac{1}{2}k\big(k + (d{-}2)\big)\Big) \|\pi_k h\|^2_{L^2(\partial B_1)}
\\&~~~
\geq (1 {-} \gamma) E_{rel}[F|\xi] - \frac{1}{2}\frac{1{+}\gamma}{1{-}\gamma} E_{rel}[F|\xi].
\end{aligned}
\end{equation}
This, however, entails~\eqref{eq:FugledeAux2b}.

\begin{remark}
\label{rem:proofClassicalFuglede}
We note that only the lower bound~\eqref{eq:FugledeAux5} actually required the assumption $\|h\|_{W^{1,\infty}(\partial B_1)} \ll 1$.
The remaining arguments are even valid under the less restrictive assumption $\|h\|_{L^\infty(\partial B_1)} \ll 1$.
\end{remark}

\subsection{Outline of strategy} 
The proof of our main contribution, Theorem~\ref{theo:pertRegime}, is based on
two explicit ``coarse-graining'' steps for the geometry of the competitor~$F$,
both being compatible with the structure of the relative energy equality~\eqref{eq:relativeEnergyEquality}
and the isoperimetric inequality~\eqref{eq:quantIsoperInequ2} we aim to derive from it.
\begin{itemize}
\item \textit{Step~1 (``tubular neighborhood averaging''):} We introduce a height function 
$h\colon \partial B_1 \to \mathbb{R}$ by integrating the symmetric difference $F\Delta B_1$
along normal rays from~$\partial B_1$. This height function serves as a $BV$ graph approximation~$F_h$
of the competitor~$F$. Within the relative energy inequality~\eqref{eq:relativeEnergyEquality}
we may pass from~$F$ to~$F_h$ since we prove in Lemma~\ref{lem:BVgraphApprox}
\begin{align}
E_{rel}[F|\xi] \ll 1 \quad\Rightarrow\quad 
\bigg|\int_{F\Delta F_h} \nabla\cdot\xi + (d{-}1) \dx\bigg|
\ll E_{rel}[F|\xi]. 
\end{align}
Furthermore, we establish in Lemma~\ref{lem:L2estimatesBVgraphApprox} a careful $BV$ analogue
of the second lower bound of~\eqref{eq:FugledeAux2b}, i.e., that relative energy controls suitably
height $h$ and slope $D^{tan}h$. Roughly speaking, we show that relative energy provides $L^2$ control in
terms of height, $L^2$ control in terms of slope for parts of~$\partial^*F$ already being
in a ``Fuglede regime'', and total variation control in terms of slope for all other parts of~$\partial^*F$.
\item \textit{Step~2 (``smoothing by mollification''):} To enter the spectral analysis by spherical harmonics,
we mollify the $BV$ graph approximation~$F_h$ at a scale~$\lambda$ directly linked to the relative energy~$E_{rel}[F|\xi]$
and obtain a smooth graph approximation~$F_{h_\lambda}$. The associated smooth height function~$h_\lambda$
essentially satisfies the second lower bound of~\eqref{eq:FugledeAux2b}
\begin{align}
E_{rel}[F|\xi] \ll 1 \quad\Rightarrow\quad 
\|h\|^2_{L^2(\partial B_1)} + \frac{1}{2}\|\nabla_{\partial B_1}h\|^2_{L^2(\partial B_1)}
\leq E_{rel}[F|\xi].
\end{align}
The precise statement is the content of Lemma~\ref{lem:L2estimatesSmoothGraphApprox}.
We may also again pass from~$F_h$ to $F_{h_\lambda}$
in the relative energy inequality~\eqref{eq:relativeEnergyEquality} by
showing in Lemma~\ref{lem:smoothGraphApprox}
\begin{align}
E_{rel}[F|\xi] \ll 1 \quad\Rightarrow\quad 
\bigg|\int_{F_h\Delta F_{h_\lambda}} \nabla\cdot\xi + (d{-}1) \dx\bigg|
\ll E_{rel}[F|\xi]. 
\end{align}
\end{itemize}

After these two coarse-graining steps, which themselves are derived by a careful
slicing argument for which we set the stage in Section~\ref{subsubsec:coercivity}, 
we may close the estimates by our interpretation
of Fuglede's argument from Section~\ref{subsec:classicalFuglede}. The above two
coarse-graining steps are in their essence not new and are, for a different
purpose in a different setting, in non-optimized form already present in~\cite{FischerHensel2020}
(see also~\cite{fischer2023} for some multi-phase versions of our slicing arguments).
In the present contribution, we are in contrast to~\cite{FischerHensel2020}, however, not allowed
to loose arbitrary constants in the estimates as our goal is to absorb the bulk term
from the relative energy equality into a fraction of the relative energy.
Within this coercivity argument, we also aim to identify the same spectral gap
as in the first lower bound of~\eqref{eq:FugledeAux2b}; now, of course, under the
substantially weaker smallness of excess assumption~\eqref{def:relativeEnergy}. These requirements
lead to the streamlined and optimized version of the geometry coarse-graining 
of the present contribution.  

\section{Proofs of main results}
	\begin{figure}
		\begin{subfigure}{0.3\textwidth}
			\centering
            \begin{tikzpicture}[scale=1]
              \coordinate (A) at (0.00, 0.00);
              \coordinate (B) at (0.00, 0.00);
              \coordinate (C) at (-1.00, 0.50);
              \coordinate (D) at (-0.50, 1.00);
              \coordinate (E) at (0.00, 0.75);
              \coordinate (F) at (0.25, 1.00);
              \coordinate (G) at (0.00, 1.25);
              \coordinate (H) at (0.75, 1.25);
              \coordinate (I) at (0.75, 0.50);
              \coordinate (J) at (1.25, 0.00);
              \coordinate (K) at (0.50, -0.75);
              \coordinate (L) at (0.25, -1.25);
              \coordinate (M) at (-0.25, -0.75);
              \coordinate (N) at (-1.00, -0.75);
              \coordinate (O) at (-1.25, 0.00);
              \coordinate (P) at (1.25, 0.75);
              \coordinate (Q) at (1.25, 0.75);
              \coordinate (R) at (-2.00, 0.75);
              \coordinate (S) at (-2.00, 0.75);


              \draw[red] (A) circle (1.14);

              \draw[blue] (C) -- (D) -- (E) -- (F) -- (G) -- (H) -- (I) -- (J) -- (K) -- (L) -- (M) -- (N) -- (O) -- cycle;
              \draw[fill,blue, opacity=0.2] (C) -- (D) -- (E) -- (F) -- (G) -- (H) -- (I) -- (J) -- (K) -- (L) -- (M) -- (N) -- (O) -- cycle;

              \draw[blue] (R) circle (0.10);
              \draw[fill,blue, opacity=0.2] (R) circle (0.10);
            \end{tikzpicture}
          \end{subfigure}
		  \begin{subfigure}{0.3\textwidth}
			\centering
              \begin{tikzpicture}[scale=1]
                \coordinate (A) at (0.00, 0.00);
                \coordinate (B) at (0.00, 0.00);
                \coordinate (C) at (-1.00, 0.50);
                \coordinate (D) at (-0.50, 1.00);
                \coordinate (E) at (-0.01, 0.75);
                \coordinate (F) at (0.25, 1.24);
                \coordinate (H) at (0.75, 1.25);
                \coordinate (I) at (0.75, 0.50);
                \coordinate (J) at (1.25, 0.00);
                \coordinate (K) at (0.50, -0.75);
                \coordinate (L) at (0.25, -1.25);
                \coordinate (M) at (-0.25, -0.75);
                \coordinate (N) at (-1.00, -0.75);
                \coordinate (O) at (-1.25, 0.00);
                \coordinate (P) at (1.25, 0.75);
                \coordinate (Q) at (1.25, 0.75);
                \coordinate (R) at (-2.00, 0.75);
                \coordinate (S) at (-2.00, 0.75);

                \draw[red] (A) circle (1.14);

                \draw[blue] (C) -- (D) -- (E) -- (F) -- (H) -- (I) -- (J) -- (K) -- (L) -- (M) -- (N) -- (O) -- cycle;

                \draw[fill,blue, opacity=0.2] (C) -- (D) -- (E) -- (F) -- (H) -- (I) -- (J) -- (K) -- (L) -- (M) -- (N) -- (O) -- cycle;
              \end{tikzpicture}
            \end{subfigure}
			\begin{subfigure}{0.3\textwidth}
				\centering
				\begin{tikzpicture}[scale=1]
                \coordinate (A) at (0.00, 0.00);
                \coordinate (B) at (0.00, 0.00);
                \coordinate (C) at (-1.00, 0.50);
                \coordinate (D) at (-0.50, 1.00);
                \coordinate (E) at (-0.01, 0.75);
                \coordinate (F) at (0.25, 1.24);
                \coordinate (H) at (0.75, 1.25);
                \coordinate (I) at (0.75, 0.50);
                \coordinate (J) at (1.25, 0.00);
                \coordinate (K) at (0.50, -0.75);
                \coordinate (L) at (0.25, -1.25);
                \coordinate (M) at (-0.25, -0.75);
                \coordinate (N) at (-1.00, -0.75);
                \coordinate (O) at (-1.25, 0.00);
                \coordinate (P) at (1.25, 0.75);
                \coordinate (Q) at (1.25, 0.75);
                \coordinate (R) at (-2.00, 0.75);
                \coordinate (S) at (-2.00, 0.75);

                \draw[red] (A) circle (1.14);

                \draw [blue] plot [smooth, tension=0.5] coordinates {(C) (D) (E) (F) (H) (I) (J) (K) (L) (M) (N) (O) (C)};
                \draw[fill,blue, opacity=0.2] plot [smooth, tension=0.5] coordinates {(C) (D) (E) (F) (H) (I) (J) (K) (L) (M) (N) (O) (C)};

              \end{tikzpicture}
            \end{subfigure}
			\caption{Starting from a general set of finite perimeter (left), we first approximate it by a $BV$ graph over $\partial B_1$ (middle), and then by a smooth one (right). The key is to control the error in each step by the relative energy $E_{rel}[F|\xi]$.}
			\label{fig:graph_approximation}
    \end{figure}
The following subsections~\ref{subsubsec:coercivity}--\ref{subsec:proofTheoremPertRegime} 
provide a proof of Theorem~\ref{theo:pertRegime}, for which we fix throughout
a set of finite perimeter~$F \subset \Rd$ satisfying the constraints~\eqref{eq:volumeConstraint2}--\eqref{eq:smallRelEnergy},
and determine the constant~$\eps \in (0,1)$
in the course of the proof. 

\subsection{Coercivity of the relative energy}\label{subsubsec:coercivity}
It is our goal to specify the error control provided
by the relative energy. To this end, we distinguish between its
behaviour within and outside a tubular neighborhood around~$\partial B_1$.
We start with the former.

Let~$\delta \in (0,\frac{1}{4})$, define 
$\mathbb{T}_\pi := [0,\pi)^{d-2} \times [0,2\pi)$, and consider the map
\begin{equation}
\begin{aligned}
\label{eq:polarCoordinates}
f\colon (-\infty,1) \times \mathbb{T}_\pi &\to \Rd\setminus\{0\},
\\
(s,\vec{\theta}) &\mapsto f(s,\vec{\theta}),
\end{aligned}
\end{equation}
induced by polar coordinates, where the radial variable is replaced by the signed distance~$s$ to~$\partial B_1$.
The restriction of~$f$ to the interior of~$\mathbb{T}_\pi$ 
is a diffeomorphism onto~$\Rd\setminus\{x \in \Rd\colon x_d = 0,\, x_{d-1} \geq 0\}$.
By stability of sets of finite perimeter under 
diffeomorphic coordinate changes (cf.\ \cite[Section~17.1]{Maggi2012}) 
and $BV$ slicing theory (cf.\ \cite[Section~3.11]{AmbrosioFuscoPallara}),
we know that for $\mathcal{L}^{d{-}1}$-a.e.\ $\vec{\theta} \in \mathbb{T}_\pi$
\begin{itemize}
\item[$(\mathrm{S1})_{\vec{\theta}}$] 
the set $F_{\vec{\theta}} := \{s \in (-\delta,\delta) \colon f(s,\vec{\theta}) \in F\}$ is a set of finite
perimeter in~$(-\delta,\delta)$, i.e., a union of finitely many open and bounded intervals,
\item[$(\mathrm{S2})_{\vec{\theta}}$]  
$\chi_F(f(s,\vec{\theta}) ) = \chi^*_F(f(s,\vec{\theta}))$ for $\mathcal{L}^1$-a.e.\ $s \in (-\delta,\delta)$,
where $\chi^*_F$ denotes the precise representative of~$\chi_F$,
\item[$(\mathrm{S3})_{\vec{\theta}}$]   
$\partial^*F_{\vec{\theta}} = \{s \in (-\delta,\delta) \colon f(s,\vec{\theta}) \in \partial^*F\}$,
\item[$(\mathrm{S4})_{\vec{\theta}}$]
 $\n_{\partial^*F}(f(s,\vec{\theta})) \cdot \n_{\partial B_1}(f(0,\vec{\theta})) \neq 0$ for all $s \in \partial^*F_{\vec{\theta}}$,
\item[$(\mathrm{S5})_{\vec{\theta}}$]   
$\lim_{\tilde{s} \downarrow s} \chi^*_F(f(\tilde{s},\vec{\theta})) = 1$ as well as $\lim_{\tilde{s} \uparrow s} \chi^*_F(f(\tilde{s},\vec{\theta})) = 0$ 
if $\n_{\partial^*F}(f(s,\vec{\theta})) \cdot \n_{\partial B_1}(f(0,\vec{\theta})) > 0$, $s \in \partial^*F_{\vec{\theta}}$,
\item[$(\mathrm{S6})_{\vec{\theta}}$] 
$\lim_{\tilde{s} \downarrow s} \chi^*_F(f(\tilde{s},\vec{\theta})) = 0$ as well as $\lim_{\tilde{s} \uparrow s} \chi^*_F(f(\tilde{s},\vec{\theta})) = 1$
if $\n_{\partial^*F}(f(s,\vec{\theta})) \cdot \n_{\partial B_1}(f(0,\vec{\theta})) < 0$, $s \in \partial^*F_{\vec{\theta}}$.
\end{itemize}

Denoting the $\mathcal{L}^{d-1}$ null set in $\mathbb{T}_\pi$ for which the above does not apply by~$\mathcal{N}$, i.e.,
\begin{equation}
\begin{aligned}
\label{eq:nullSetSlicing}
\mathbb{T}_\pi \setminus \mathcal{N} = \{\vec{\theta}\in \mathbb{T}_\pi\colon (\mathrm{S1})_{\vec{\theta}},\ldots,(\mathrm{S6})_{\vec{\theta}} \text{ apply}\},
\end{aligned}
\end{equation}
we now decompose $\mathbb{T}_\pi \setminus \mathcal{N}$ in the form of
\begin{align}
\label{eq:decomp}
\mathbb{T}_\pi \setminus \mathcal{N} = \Theta_{\text{outside}} \cup \Theta_{\text{graph}} \cup  \Theta_{\text{slope}},
\end{align}
where 
\begin{align}
\label{eq:defThetaOutside}
\Theta_{\text{outside}} &:= \big\{\vec{\theta} \in \mathbb{T}_\pi \setminus \mathcal{N} \colon 
\#\partial^*F_{\vec{\theta}} = 0\big\},
\\
\label{eq:defThetaGraph}
\Theta_{\text{graph}} &:= \big\{\vec{\theta} \in \mathbb{T}_\pi \setminus \mathcal{N} \colon 
\partial^*F_{\vec{\theta}} = \{s\}, \n_{\partial^*F}(f(s,\vec{\theta})) \cdot \n_{\partial B_1}(f(0,\vec{\theta})) > 0\,\big\},
\end{align}
and
\begin{equation}
\begin{aligned}
\label{eq:defThetaSlope}
\Theta_{\text{slope}} &:= \big\{\vec{\theta} \in \mathbb{T}_\pi \setminus \mathcal{N} \colon 
\text{ either } \#\partial^*F_{\vec{\theta}} \geq 2 \text{ or } \partial^*F_{\vec{\theta}} = \{s\}
\\&~~~~~~~~~~~~~~~~~~~~~~~~
\text{ such that } \n_{\partial^*F}(f(s,\vec{\theta})) \cdot \n_{\partial B_1}(f(0,\vec{\theta})) < 0\,\big\}.
\end{aligned}
\end{equation}

Or in words, $\Theta_{\text{outside}}$ represents the set of those angles~$\vec{\theta}\in \mathbb{T}_\pi$ for which the interface of~$F$
does not appear in the slice~$F_{\vec{\theta}}$, $\Theta_{\text{graph}}$ represents the set of those angles~$\vec{\theta}\in \mathbb{T}_\pi$ for which the interface of~$F$ appears once and with correct orientation in the slice~$F_{\vec{\theta}}$,
and $\Theta_{\text{slope}}$ represents the set of those angles~$\vec{\theta}\in \mathbb{T}_\pi$ 
for which the interface of~$F$ appears
either with overlaps or exactly once but with wrong orientation in the slice~$F_{\vec{\theta}}$. 

It will also be crucial to identify the set of those angles~$\vec{\theta}\in \Theta_{\text{graph}}$ 
corresponding to perturbatively small slope of the height function,
i.e., the Fuglede regime:
\begin{align} 
\label{eq:defThetaFuglede}
\Theta_{\text{Fuglede}} := \Big\{\vec{\theta} \in \Theta_{\text{graph}}\colon
\n_{\partial^*F}(f(s,\vec{\theta})) \cdot \n_{\partial B_1}(f(0,\vec{\theta})) 
\geq \frac{1}{\sqrt{1{+}c^2}}\Big\},
\end{align}
where $c \in (0,1]$.

Define the relative energy density $e_{rel}(s,\vec{\theta}) := (1 - \n_{\partial^*F}\cdot\xi)(f(s,\vec{\theta}))$,
$(s,\vec{\theta}) \in (-\delta,\delta){\times} \mathbb{T}_\pi$, as well as $\eta(s) := 1-s^2$,
$s \in (-\delta,\delta)$. In particular, 
\begin{align}
\label{eq:relEnergyDensity}
e_{rel}(s,\vec{\theta}) = 1 - \eta(s)\n_{\partial^*F}(f(s,\vec{\theta}))\cdot \n_{\partial B_1}(f(0,\vec{\theta})),
\quad (s,\vec{\theta}) \in (-\delta,\delta){\times} \mathbb{T}_\pi.
\end{align} 
Defining $\mathcal{N}^\delta := \{f(s,\vec{\theta})\colon \vec{\theta} \in \mathcal{N}, \,s \in (-\delta,\delta)\}$,
we then obtain from the area formula
\begin{equation}
\label{eq:errorControlInSlices}
\begin{aligned}
&\int_{\partial^*F \cap (\{\dist(\cdot,\partial B_1) < \delta\} \setminus \mathcal{N}^\delta)} 
1 - \n_{\partial^*F} \cdot \xi \dH
\\&~~~
=\int_{\mathbb{T}_\pi \setminus \mathcal{N}}
\sum_{s \in \partial^*F_{\vec{\theta}}} \frac{(1 {-} s)^{d{-}1}}{|
\n_{\partial^*F}(f(s,\vec{\theta}))\cdot \n_{\partial B_1}(f(0,\vec{\theta}))|}
e_{rel}(s,\vec{\theta})  \dvectheta.
\end{aligned}
\end{equation}
Since
\begin{equation}
\begin{aligned}
e_{rel}(s,\vec{\theta}) &= \big(1 {-} \eta(s)\big) 
+ \eta(s)\big(1 {-} \n_{\partial^*F}(f(s,\vec{\theta}))\cdot \n_{\partial B_1}(f(0,\vec{\theta}))\big)
\\
&= s^2 + (1 {-} s^2)\big(1 {-} \n_{\partial^*F}(f(s,\vec{\theta}))\cdot \n_{\partial B_1}(f(0,\vec{\theta}))\big),
\end{aligned}
\end{equation}
we get
\begin{equation}
\label{eq:errorControlSlice}
\begin{aligned}
&\int_{\partial^*F \cap (\{\dist(\cdot,\partial B_1) < \delta\} \setminus \mathcal{N}^\delta)} 
1 - \n_{\partial^*F} \cdot \xi \dH
\\&~~~
\geq \int_{\mathbb{T}_\pi \setminus \mathcal{N}}
\sum_{s \in \partial^*F_{\vec{\theta}}} \frac{(1 {-} s)^{d{-}1}}{|
\n_{\partial^*F}(f(s,\vec{\theta}))\cdot \n_{\partial B_1}(f(0,\vec{\theta}))|}
\big(e_{height}(s,\vec{\theta}) + e_{slope}(s,\vec{\theta})\big)  \dvectheta,
\end{aligned}
\end{equation}
where in the last line we abbreviated
\begin{align}
\label{def:heightControl}
e_{height}(s,\vec{\theta}) &:= s^2,
\\
\label{def:orientControl}
e_{slope}(s,\vec{\theta}) &:= (1 {-} \delta^2)
\big(1 {-} \n_{\partial^*F}(f(s,\vec{\theta}))\cdot \n_{\partial B_1}(f(0,\vec{\theta}))\big).
\end{align}
We also control the length of vertical components of the competitor interface by means of
\begin{equation}
\begin{aligned}
\label{eq:errorControlVertical}
&\int_{\partial^*F \cap \{\dist(\cdot,\partial B_1) < \delta\} \cap \mathcal{N}^\delta} 
1 - \n_{\partial^*F} \cdot \xi \dH
\\&~~~
\geq \mathcal{H}^{d-1}\bigg(\partial^*F \cap \{\dist(\cdot,\partial B_1) < \delta\} \cap \mathcal{N}^\delta
\cap \Big\{x \colon \n_{\partial^*F}(x)\cdot \n_{\partial B_1}\Big(\frac{x}{|x|}\Big)=0\Big\}\bigg).
\end{aligned}
\end{equation}

For $x \in \Rd$ with $\dist(x,\partial B_1)\geq \delta$, we simply note that
$(1 {-} \n_{\partial^*F}\cdot\xi)(x) \geq \delta^2$. Hence, outside the $\delta$-tubular neighborhood we 
control the length of the interface of the competitor in the sense of
\begin{align}
\label{eq:errorControlOutside}
\int_{\partial^*F \cap \{\dist(\cdot,\partial B_1) \geq \delta\}}  
1 - \n_{\partial^*F} \cdot \xi \dH
\geq \delta^2\mathcal{H}^{d-1}\big(\partial^*F \cap \{\dist(\cdot,\partial B_1) \geq \delta\}\big).
\end{align}

\subsection{From general competitor to its $BV$ graph approximation}
Let $\delta \in (0,\frac{1}{4})$.
We define 
a height function~$h\colon \mathbb{T}_\pi \to [-\delta,\delta]$
\begin{align}
\label{def:height}
h(\vec{\theta}) := -\int_{-\delta}^{\delta} (\chi_F {-} \chi_{B_1})(f(s,\vec{\theta})) \ds.
\end{align}
In what follows, we will always identify~$h$ with its periodic extension
from $\mathbb{T}_\pi = [0,\pi)^{d-2}{\times}[0,2\pi)$
to~$\mathbb{R}^{d-1}$. The point of this height function is that it provides a $BV$ graph approximation~$\chi_h$ to~$\chi_F$
up to an admissible error in the relative energy:
\begin{align}
\label{eq:BVgraphApprox}
\chi_h(f(s,\vec{\theta})) := \chi_{B_1}(f(s,\vec{\theta})) 
												- \chi_{\{(s,\vec{\theta})\colon 0 \leq s \leq h(\vec{\theta})\}}(s,\vec{\theta})
												+ \chi_{\{(s,\vec{\theta})\colon h(\vec{\theta}) \leq s \leq 0\}}(s,\vec{\theta}).
\end{align}
Indeed:
\begin{lemma}
\label{lem:BVgraphApprox}
For every $C' \in (1,\infty)$
one may choose $\delta \ll 1$ and $\eps \ll_\delta 1$ such that
\begin{align}
-\int_{\Rd} (\chi_F - \chi_h) (\nabla\cdot\xi + (d{-}1)) \dx 
\geq - \frac{1}{C'} E_{rel}[F|\xi].
\end{align}
\end{lemma}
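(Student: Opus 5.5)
The plan is to split $\Rd$ into the $\delta$-tubular neighborhood $\{\dist(\cdot,\partial B_1)<\delta\}$ and its complement. On the complement $\chi_h=\chi_{B_1}$ by~\eqref{eq:BVgraphApprox}, so there the integrand is $(\chi_F-\chi_{B_1})(\nabla\cdot\xi+(d{-}1))$. Inside the tube I would work slice by slice in the coordinates $(s,\vec{\theta})$ from~\eqref{eq:polarCoordinates}, with Jacobian $(1{-}s)^{d-1}$, and use the decomposition~\eqref{eq:decomp}. In both regions the goal is to bound the error by $C(d)\,\delta^{2}E_{rel}[F|\xi]$ plus a superlinear term in $E_{rel}[F|\xi]$. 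One then chooses first $\delta\ll 1$ and afterwards $\eps\ll_\delta 1$ so that the total is at most $\frac1{C'}E_{rel}[F|\xi]$.

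\emph{Inside the tube.} By~\eqref{eq:VolumeIntegrand} the integrand equals $-(d{+}1)s$, so $|\nabla\cdot\xi+(d{-}1)|\le (d{+}1)\delta$ there.
\begin{itemize}
\item For $\vec{\theta}\in\Theta_{\text{graph}}$, properties $(\mathrm{S5})_{\vec{\theta}}$ and $(\mathrm{S6})_{\vec{\theta}}$ show that the slice is exactly $F_{\vec{\theta}}=(s_*,\delta)$. The definition~\eqref{def:height} then gives $h(\vec{\theta})=s_*$, hence $\chi_F=\chi_h$ along the whole slice.
\item For $\vec{\theta}\in\Theta_{\text{outside}}$ the slice is either empty or full. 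Then $h=\delta$ or $h=-\delta$ respectively, and again $\chi_F=\chi_h$.
\item Only $\vec{\theta}\in\Theta_{\text{slope}}$ contribute. Each such slice has at least one crossing with $\n_{\partial^*F}\cdot\n_{\partial B_1}<0$: for $\#\partial^*F_{\vec{\theta}}\ge 2$ this holds because orientations alternate by $(\mathrm{S5})$ and $(\mathrm{S6})$. At that crossing $e_{slope}\ge 1-\delta^2$, and the area-formula weight satisfies $(1{-}s)^{d-1}/|\n\cdot\n_{\partial B_1}|\ge (1{-}\delta)^{d-1}$.
\end{itemize}
By~\eqref{eq:errorControlSlice}, $\mathcal{L}^{d-1}(\Theta_{\text{slope}})\lesssim E_{rel}[F|\xi]$. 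The slice contribution is bounded by $\int_{-\delta}^{\delta}(d{+}1)|s|(1{+}\delta)^{d-1}\ds\lesssim\delta^2$ per angle. Hence the tube part is at most $C\delta^2E_{rel}[F|\xi]$. The $\mathcal{L}^{d-1}$-null set $\mathcal{N}$ contributes nothing to the volume integral.

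\emph{Outside the tube.} Here $|\nabla\cdot\xi+(d{-}1)|\le C(d,R)$, since $F\subset B_R$. It therefore suffices to show $\mathcal{L}^d\big((F\Delta B_1)\cap\{\dist(\cdot,\partial B_1)\ge\delta\}\big)\ll E_{rel}[F|\xi]$. I would apply the isoperimetric inequality to the two pieces $G_+:=F\setminus \overline{B_{1+\delta}}$ and $G_-:=B_{1-\delta}\setminus F$. Their perimeters are bounded by $\mathcal{H}^{d-1}(\partial^*F\cap\{\dist(\cdot,\partial B_1)\ge\delta\})$ plus the traces $\mathcal{H}^{d-1}(F\cap\partial B_{1+\delta})$ and $\mathcal{H}^{d-1}(\partial B_{1-\delta}\setminus F)$. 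The first term is at most $\delta^{-2}E_{rel}[F|\xi]$ by~\eqref{eq:errorControlOutside}. For the traces, take an angle with $f(-\delta,\vec{\theta})\in F$ (trace sense, via $(\mathrm{S2})$).
\begin{itemize}
\item If $\vec{\theta}\in\Theta_{\text{graph}}$, this is impossible, since below the single correctly oriented crossing one is outside $F$.
\item If $\vec{\theta}\in\Theta_{\text{slope}}$, the angle is already controlled as above.
\item If $\vec{\theta}\in\Theta_{\text{outside}}$ with the full slice in $F$, the radial ray leaves $F$ before $\partial B_R$. Radial projection onto $\partial B_{1+\delta}$ is $1$-Lipschitz on $\{|x|\ge 1+\delta\}$, so these angles have measure at most $\mathcal{H}^{d-1}(\partial^*F\cap\{|x|\ge1+\delta\})\le\delta^{-2}E_{rel}[F|\xi]$.
\end{itemize}
The analogous argument for $\partial B_{1-\delta}\setminus F$ works with rays toward the origin. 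There radial projection is not Lipschitz near $0$, so I would instead directly use the relative isoperimetric inequality in $B_{1-\delta}$ for $G_-$, whose boundary inside $B_{1-\delta}$ is part of $\partial^*F$. Altogether $P(G_\pm)\lesssim \delta^{-2}E_{rel}[F|\xi]$. The isoperimetric inequality then gives $\mathcal{L}^d(G_\pm)\lesssim(\delta^{-2}E_{rel}[F|\xi])^{d/(d-1)}\le \delta^{-2d/(d-1)}\eps^{1/(d-1)}E_{rel}[F|\xi]$, which is small once $\eps\ll_\delta 1$.

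The main obstacle is this outside-the-tube volume estimate, specifically controlling the trace terms on $\partial B_{1\pm\delta}$ (above all the inner sphere) by $E_{rel}[F|\xi]$ alone. If the direct slicing bookkeeping there becomes delicate, my fallback is a coarea argument. I would pick $\delta'\in(\delta/2,\delta)$ so that the trace of $F\Delta B_1$ on $\partial B_{1\pm\delta'}$ is bounded by $\delta^{-1}$ times $\mathcal{L}^d((F\Delta B_1)\cap\text{tube})$. The latter is $\lesssim\int|h|\dvectheta$ plus the slope contribution, hence $\lesssim (E_{rel}[F|\xi])^{1/2}$ by the height control $e_{height}=s^2$; this trace bound is again superlinear after applying the isoperimetric exponent $d/(d{-}1)$ only if combined with the outside perimeter bound, so I would verify the exponents carefully at that point.
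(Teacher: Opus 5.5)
Your tube argument is the paper's. Your estimate for $G_+=F\setminus\overline{B_{1+\delta}}$ is a valid variant: you use the global isoperimetric inequality plus a trace bound from slicing and the $1$-Lipschitz radial projection, where the paper uses a relative isoperimetric inequality in $\Rd\setminus B_{1+\delta}$.

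\textbf{The gap is in the estimate for $G_-=B_{1-\delta}\setminus F$.} The relative isoperimetric inequality in $B_{1-\delta}$ only gives
\[
\min\big\{\mathcal{L}^d(B_{1-\delta}\setminus F),\,\mathcal{L}^d(B_{1-\delta}\cap F)\big\}^{\frac{d-1}{d}}\lesssim \mathcal{H}^{d-1}(\partial^*F\cap B_{1-\delta}).
\]
On its own this says nothing about $\mathcal{L}^d(G_-)$: if $F\cap B_{1-\delta}=\emptyset$, the right-hand side vanishes while $G_-=B_{1-\delta}$. For the same reason, your claim $P(G_-)\lesssim\delta^{-2}E_{rel}[F|\xi]$ is unjustified, because the trace of $G_-$ on $\partial B_{1-\delta}$ is exactly what you did not control.

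What is missing is the case distinction made in the paper. Suppose $\mathcal{L}^d(B_{1-\delta}\cap F)\le\frac12\mathcal{L}^d(B_{1-\delta})$. Then the volume constraint and $\mathcal{L}^d(B_{1+\delta}\setminus B_{1-\delta})=O(\delta)$ force $\mathcal{L}^d(G_+)\ge\frac{\omega_d}{4}$ for $\delta\ll1$. This contradicts your bound $\mathcal{L}^d(G_+)\lesssim(\delta^{-2}\eps)^{d/(d-1)}$ once $\eps\ll_\delta 1$; the paper phrases it as $\mathcal{L}^d(G_-)\lesssim\mathcal{L}^d(G_+)$ in that case. Hence $G_-$ is the smaller piece, and the relative isoperimetric inequality applies as you intended. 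With this addition your argument closes.

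Your coarea fallback would not rescue this step. The tube volume $\mathcal{L}^d\big((F\Delta B_1)\cap\{\dist(\cdot,\partial B_1)<\delta\}\big)$ is in general only of order $\|h\|_{L^1}\lesssim E_{rel}[F|\xi]^{1/2}$. So your trace bound is $\delta^{-1}E_{rel}[F|\xi]^{1/2}$, and the isoperimetric inequality then yields
\[
\mathcal{L}^d(G_-)\lesssim\delta^{-d/(d-1)}E_{rel}[F|\xi]^{d/(2(d-1))}.
\]
Since $\frac{d}{2(d-1)}\le 1$ for every $d\ge 2$, this never gives a bound by $\frac{1}{C'}E_{rel}[F|\xi]$. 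Equality holds only for $d=2$, and there the prefactor $\delta^{-2}$ is large.

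A minor point: $|\nabla\cdot\xi+(d{-}1)|\le d+1$ holds on all of $\Rd$. So you do not need $F\subset B_R$ to bound the integrand outside the tube, and no $R$-dependence enters the choice of $\eps$ there.
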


\begin{proof}[Proof of Lemma~\ref{lem:BVgraphApprox}]
By the change of variables $(-\infty,1) \times \mathbb{T}_\pi \ni (s,\vec{\theta}) \mapsto x := 
f(s,\vec{\theta})$ with associated Jacobian determinant~$\mathcal{J}_f(s,\vec{\theta})$,
we may split the task into three parts:
\begin{align}
\int_{\Rd} (\chi_F - \chi_h) (\nabla\cdot\xi + (d{-}1)) \dx =: I + I\!I + I\!I\!I,
\end{align}
where
\begin{align}
I &:= \int_{\mathbb{T}_\pi} \int_{-\delta}^{\delta} 
\big(\chi_F {-} \chi_h\big)(f(s,\vec{\theta})) \big(\nabla\cdot\xi {+} (d{-}1)\big)(f(s,\vec{\theta})) \mathcal{J}_f(s,\vec{\theta})
\ds \dvectheta,
\\
I\!I &:= \int_{\mathbb{T}_\pi} \int_{\delta}^{1}
\big(\chi_F {-} 1\big)(f(s,\vec{\theta})) \big(\nabla\cdot\xi {+} (d{-}1)\big)(f(s,\vec{\theta})) \mathcal{J}_f(s,\vec{\theta})
\ds \dvectheta, 
\\
I\!I\!I &:= \int_{\mathbb{T}_\pi} \int_{-\infty}^{-\delta}
\chi_F(f(s,\vec{\theta})) \big(\nabla\cdot\xi {+} (d{-}1)\big)(f(s,\vec{\theta})) \mathcal{J}_f(s,\vec{\theta})
\ds \dvectheta. 
\end{align}

We start with the contribution from~$I\!I\!I$
and note that, since~$\xi$ is compactly supported, $|I\!I\!I| \lesssim \mathcal{L}^d((\Rd\setminus B_{1+\delta}) \cap F)$.
Due to the volume constraint~\eqref{eq:volumeConstraint2},
$\mathcal{L}^d((\Rd\setminus B_{1+\delta}) \cap F) \leq \omega_d$ and thus by the relative isoperimetric inequality in~$\Rd\setminus B_{1+\delta}$
it follows $|I\!I\!I| \lesssim \mathcal{H}^{d{-}1}((\Rd\setminus B_{1+\delta}) \cap \partial^*F)^\frac{d}{d-1}$. However, 
by the error control~\eqref{eq:errorControlOutside},
$\mathcal{H}^{d-1}((\Rd\setminus B_{1+\delta}) \cap \partial^*F) \leq \frac{1}{\delta^2}E_{rel}[F|\xi]$
so that the smallness of the relative energy~\eqref{eq:smallRelEnergy} implies for $\eps \ll_\delta 1$
\begin{align}
|I\!I\!I| \leq \frac{1}{3C'}E_{rel}[F|\xi].
\end{align} 

The argument for the required estimate of~$I\!I$ is similar. However, in order to avoid a degenerating constant from
the relative isoperimetric inequality in~$B_{1-\delta}$, we actually perform a case distinction. To this end,
we first consider the case that $\mathcal{L}^d\big(B_{1-\delta}\setminus F\big) \leq \frac{1}{2}(1{-}\delta)^d\omega_d$. 
Arguing as for~$I\!I\!I$, but now based on the relative isoperimetric inequality in~$B_{1-\delta}$, shows
$|I\!I| \leq \frac{1}{3C'}E_{rel}[F|\xi]$. In the other case, recalling the volume constraint~\eqref{eq:volumeConstraint2}
and noting that $\mathcal{L}^d(B_{1+\delta}\setminus B_{1-\delta}) = O(\delta)\omega_d$, it follows
$\mathcal{L}^d((\Rd\setminus B_{1+\delta}) \cap F) \geq \omega_d - O(\delta)\omega_d - \frac{1}{2}(1{-}\delta)^2\omega_d$, 
which may be bounded from below by $\frac{\omega_d}{4}$
provided $\delta\ll 1$. Hence, provided $\delta \ll 1$, we may estimate $\mathcal{L}^d(B_{1-\delta}\setminus F) \leq (1{-}\delta)^2\omega_d
= 4(1{-}\delta)^2\frac{\omega_d}{4}\leq 4(1-\delta)^2 \mathcal{L}^d((\Rd\setminus B_{1+\delta}) \cap F)$.
The argument for~$I\!I\!I$ therefore enables us to conclude the second case, so that in both cases
for $\delta \ll 1$ and $\eps \ll_\delta 1$
\begin{align}
|I\!I| \leq \frac{1}{C'}E_{rel}[F|\xi].
\end{align} 

It remains to estimate the contribution from~$I$. First note that
for $\vec{\theta} \in \Theta_{\text{outside}} \cup \Theta_{\text{graph}}$
it holds $\big(\chi_F {-} \chi_h\big)(f(s,\vec{\theta})) = 0$
for a.e.\ $s \in (-\delta,\delta)$.
Hence, we only have to provide
an estimate for $\vec{\theta} \in \Theta_{\text{slope}}$. 
A crude estimate then gives
\begin{align}
|I| \lesssim \int_{\Theta_{\text{slope}}} \int_{-\delta}^{\delta} (1{-}s)^{d-1} \ds \dvectheta
= \mathcal{L}^{d-1}(\Theta_{\text{slope}}) O(\delta). 
\end{align}
Next, we claim
\begin{align}
\label{eq:controlAmountOfOVerlaps}
\mathcal{L}^{d-1}(\Theta_{\text{slope}}) \lesssim E_{rel}[F|\xi].
\end{align}
Indeed, for each $\vec{\theta} \in \Theta_{\text{slope}}$
we may find $s_{\vec{\theta}} \in \partial^*F_{\vec{\theta}}$ such that
$\n_{\partial^*F}(f(s_{\vec{\theta}},\vec{\theta})) \cdot \n_{\partial B_1}(f(0,\vec{\theta})) < 0$;
in particular, $e_{slope}(s_{\vec{\theta}},\vec{\theta}) \geq 1 - \delta^2$,
see~\eqref{def:orientControl}. Hence,
\begin{equation}
\begin{aligned}
&\mathcal{L}^{d-1}(\Theta_{\text{slope}}) 
\\&~~~
\leq (1 {+} O(\delta)) \int_{\Theta_{\text{slope}}} 
\frac{(1 {-} s_{\vec{\theta}})^{d{-}1}}{|
\n_{\partial^*F}(f(s_{\vec{\theta}},\vec{\theta})) \cdot \n_{\partial B_1}(f(0,\vec{\theta}))|}
e_{slope}(s_{\vec{\theta}},\vec{\theta}) \dvectheta
\end{aligned}
\end{equation}
so that~\eqref{eq:controlAmountOfOVerlaps} follows from~\eqref{eq:errorControlSlice}.
In summary, we obtain for $\delta \ll 1$
\begin{align}
\label{eq:aux1}
|I| \leq \frac{1}{3C'}E_{rel}[F|\xi]. 
\end{align}  
This concludes the proof.
\end{proof}

We will also require the following properties:
\begin{lemma}
\label{lem:L2estimatesBVgraphApprox}
The height function~$h$ defined by~\eqref{def:height} 
satisfies $h \in BV(\mathbb{T}_\pi)$, and we denote
by $Dh = \nabla h\dvectheta + D^sh$ the associated Radon--Nikodym decomposition
of the distributional derivative $Dh$ (recall here that we always identify~$h$
with its periodic extension from $\mathbb{T}_\pi = [0,\pi)^{d-2}{\times}[0,2\pi)$
to~$\mathbb{R}^{d-1}$). 

Furthermore, for every $C'\in(1,\infty)$ one may choose $\delta \ll 1$ and $\eps \ll_\delta 1$
such that we obtain 
\begin{itemize}
\item[i)] a global $L^2$ estimate
for the height of the $BV$ graph approximation (recall the notation from~\ref{subsubsec:coercivity})
\begin{equation}
\begin{aligned}
\label{eq:L2estimateHeight}
&\int_{\mathbb{T}_\pi} h^2 \dvectheta 
\\&~~~
\leq \big(1{+}O(\delta)\big)  \int_{\mathbb{T}_\pi\setminus \mathcal{N}} 
\sum_{s \in \partial^*F_{\vec{\theta}}}  \frac{(1 {-} s)^{d{-}1}}{|
\n_{\partial^*F}(f(s,\vec{\theta})) \cdot \n_{\partial B_1}(f(0,\vec{\theta}))|}
e_{height}(s,\vec{\theta})  \dvectheta
\\&~~~~~~
+ \frac{1}{C'} E_{rel}[F|\xi],
\end{aligned}
\end{equation}
\item[ii)]
in the graph regime with perturbatively small slope (i.e., in the Fuglede regime~\eqref{eq:defThetaFuglede})
an $L^2$ estimate for the absolutely continuous part of~$Dh$ 
\begin{equation}
\begin{aligned}
\label{eq:L2estimateDerivativeHeightL2}
&\int_{\mathbb{T}_\pi \cap \Theta_{\emph{\text{Fuglede}}}} |\nabla h|^2 \dvectheta
\\&~~~
\leq \big(2\sqrt{1{+}c^2} {+} O(\delta)\big) 
\\&~~~~~~~~~
\times 
\int_{(\mathbb{T}_\pi\setminus \mathcal{N}) \cap \Theta_{\emph{\text{Fuglede}}}}  
\sum_{s \in \partial^*F_{\vec{\theta}}}  \frac{(1 {-} s)^{d{-}1}}{|
\n_{\partial^*F}(f(s,\vec{\theta})) \cdot \n_{\partial B_1}(f(0,\vec{\theta}))|}
e_{slope}(s,\vec{\theta}) \dvectheta,
\end{aligned}
\end{equation}
\item[iii)]
in all other regimes at least a total variation estimate for the absolutely continuous part of~$Dh$ 
\begin{equation}
\begin{aligned}
\label{eq:L2estimateDerivativeHeightL1}
&\int_{\mathbb{T}_\pi \setminus \Theta_{\emph{\text{Fuglede}}}} |\nabla h| \dvectheta
\\&~~~
\leq \bigg(\max\Big\{3,\Big(1{-}\frac{1}{\sqrt{1{+}c^2}}\Big)^{-1}\Big\} {+} O(\delta)\bigg) 
\\&~~~~~~~~~
\times 
\int_{\mathbb{T}_\pi\setminus (\mathcal{N}  \cup \Theta_{\emph{\text{Fuglede}}})}  
\sum_{s \in \partial^*F_{\vec{\theta}}}  \frac{(1 {-} s)^{d{-}1}}{|
\n_{\partial^*F}(f(s,\vec{\theta})) \cdot \n_{\partial B_1}(f(0,\vec{\theta}))|}
e_{slope}(s,\vec{\theta}) \dvectheta,
\end{aligned}
\end{equation}
\item[iv)]
and finally a total variation estimate for the singular part of~$Dh$ 
\begin{equation}
\begin{aligned}
\label{eq:L2estimateDerivativeHeight2}
&\|D^sh\|_{TV(\mathbb{T}_\pi)} 
\\&~~~
\leq \mathcal{H}^{d-1}\bigg(\partial^*F \cap \{\dist(\cdot,\partial B_1) < \delta\} \cap \mathcal{N}^\delta
\cap \Big\{x \colon \n_{\partial^*F}(x)\cdot \n_{\partial B_1}\Big(\frac{x}{|x|}\Big)=0\Big\}\bigg).
\end{aligned}
\end{equation}
\end{itemize}
\end{lemma}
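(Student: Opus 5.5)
The approach is one-dimensional slicing along the normal rays $s\mapsto f(s,\vec\theta)$, using properties $(\mathrm{S1})$--$(\mathrm{S6})$, and then transferring everything back to $\partial^*F$ via the area formula \eqref{eq:errorControlInSlices}.

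\emph{$BV$ regularity of $h$.} Since $h(\vec\theta)=-\int_{-\delta}^{\delta}(\chi_F-\chi_{B_1})(f(s,\vec\theta))\ds$ is an average of $\chi_F\circ f$ along $s$, we have in the distributional sense
\[
Dh=-\int_{-\delta}^{\delta} D_{\vec\theta}(\chi_F\circ f)(s,\cdot)\ds .
\]
The measure $D(\chi_F\circ f)$ is carried by $f^{-1}(\partial^*F)$, with density given by the pulled-back normal. Its $\vec\theta$-component therefore has total variation bounded by $\mathcal{H}^{d-1}(\partial^*F\cap\{\dist<\delta\})$ times a factor $1+O(\delta)$ coming from the Jacobian of $f$. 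This gives $h\in BV(\mathbb{T}_\pi)$; periodicity is harmless.

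\emph{Item i), the height bound.} Fix $\vec\theta\in\mathbb{T}_\pi\setminus\mathcal N$ and split by the decomposition \eqref{eq:decomp}.
\begin{itemize}
\item On $\Theta_{\text{graph}}$ the slice of $F$ is a half-line cut at the single point $s$, so $h(\vec\theta)=s$. Since $|\n\cdot\n_{\partial B_1}|\le1$ and $(1-s)^{d-1}\ge 1-O(\delta)$, we get $h^2\le(1+O(\delta))\frac{(1-s)^{d-1}}{|\n\cdot\n_{\partial B_1}|}\,e_{height}$.
\item On $\Theta_{\text{outside}}$ the slice is either all of $(-\delta,\delta)$ or empty, so $|h|=\delta$. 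Then $F\Delta B_1$ contains the full segment of length $\delta$ on one side of $\partial B_1$.
\item On $\Theta_{\text{slope}}$ we have $h^2\le\delta^2$, and \eqref{eq:controlAmountOfOVerlaps} gives $\delta^2\mathcal{L}^{d-1}(\Theta_{\text{slope}})\le O(\delta^2)E_{rel}$, which is at most $\frac{1}{2C'}E_{rel}$ for $\delta\ll1$.
\end{itemize}
For $\Theta_{\text{outside}}$ the total contribution is at most $\delta^2\mathcal{L}^{d-1}(\Theta_{\text{outside}})$. The union of the corresponding segments has volume $\gtrsim\delta\,\mathcal{L}^{d-1}(\Theta_{\text{outside}})$ and lies inside $B_{1+\delta}\setminus B_1$ or inside $B_1\setminus B_{1-\delta}$, depending on whether the slice is full or empty. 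I would control this volume as in terms $I\!I$ and $I\!I\!I$ of the proof of Lemma~\ref{lem:BVgraphApprox}. A convenient route is to slice radially up to $\pm2\delta$: the interface is then either absent from the annuli $\delta\le|s|<2\delta$, or present there and paid for by \eqref{eq:errorControlOutside}. If it is absent, the region $F\cap\{\dist\ge\delta\}$ (or the corresponding complement) is a nonempty piece whose measure is controlled by the relative isoperimetric inequality, which yields a superlinear power of $\delta^{-2}E_{rel}$. Taking $\eps\ll_\delta1$ then gives a bound by $\frac{1}{2C'}E_{rel}$. I expect this to be the main obstacle: it is the only place where a rough estimate cannot be absorbed with an $O(\delta)$ factor, and one has to make sure the isoperimetric argument yields smallness relative to $E_{rel}$ rather than to $\delta$.

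\emph{Items ii)--iv), the slope bounds.}

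On $\Theta_{\text{graph}}$ the interface is locally the graph $s=h(\vec\theta)$. The standard relation between the graph normal and the gradient gives
\[
\n\cdot\n_{\partial B_1}=\Big(1+\frac{|\nabla h|_g^2}{(1-s)^2}\Big)^{-1/2},
\]
where $|\cdot|_g$ is the gradient in the round metric. I would interpret $|\nabla h|$ in the statement accordingly, up to $1+O(\delta)$ factors; otherwise one needs the coordinate Jacobian of $\mathbb{T}_\pi$, which is presumably absorbed into the notation.

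For ii), write $t=|\nabla h|$. On $\Theta_{\text{Fuglede}}$ we have $t\le c(1+O(\delta))$, and the slope density satisfies
\[
\frac{1-\cos}{\cos}=\sqrt{1+t^2}-1=\frac{t^2}{\sqrt{1+t^2}+1}\ge\frac{t^2}{2\sqrt{1+c^2}} .
\]
Multiplying by $(1-s)^{d-1}(1-\delta^2)$ and integrating gives \eqref{eq:L2estimateDerivativeHeightL2}.

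For iii), split into two parts.
\begin{itemize}
\item On $\Theta_{\text{graph}}\setminus\Theta_{\text{Fuglede}}$ we have $\cos<(1+c^2)^{-1/2}$, so $1-\cos\ge1-(1+c^2)^{-1/2}$. Hence
\[
t\le\sqrt{1+t^2}=\frac{1}{\cos}\le\Big(1-\tfrac{1}{\sqrt{1+c^2}}\Big)^{-1}\frac{1-\cos}{\cos}.
\]
\item On $\Theta_{\text{slope}}$, the derivative $\nabla h$ is an average over all interface crossings in the slice. Each crossing contributes its tangential slope $t_i$, and $t_i\le 1/|\cos_i|$. A wrongly oriented crossing has $1-\cos\ge1$. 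The crossings alternate in orientation, so a correctly oriented one is paired with a wrong one; this lets us bound $\sum_i 1/|\cos_i|$ by $3\sum_i(1-\cos_i)/|\cos_i|$. The constant $3$ arises from $1+2$: at most two crossings of any type are charged to each wrong crossing, whose weight $1-\cos\ge\frac12(1+|\cos|)$.
\end{itemize}

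For iv), the singular part $D^sh$ is generated by interface pieces on which $\n\cdot\n_{\partial B_1}=0$ or which lie over the null set $\mathcal N$. Over $\mathcal N^\delta$, the coarea/slicing formula shows that $|D^sh|$ is bounded by the $\mathcal{H}^{d-1}$-measure of the vertical parts of $\partial^*F$ there, which is exactly \eqref{eq:L2estimateDerivativeHeight2}.
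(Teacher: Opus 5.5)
Your treatment of the $BV$ regularity and of items ii)--iv) follows the paper's route. Your pullback identity for $Dh$ is equivalent to the paper's integration by parts against tangential fields extended constantly along normal rays. Disintegrating it with the area formula gives the slice formula $\nabla h(\vec\theta)=\sum_{s\in\partial^*F_{\vec\theta}}\frac{1-s}{|\n_{\partial^*F}\cdot\n_{\partial B_1}|}P_{Tan}\n_{\partial^*F}$ for $\vec\theta\notin\mathcal N$. State this formula explicitly: it is a sum over crossings, not an average, and it is what justifies your graph relation on $\Theta_{\text{graph}}$ when $h$ is merely $BV$. One correction in iii) concerns nearly vertical, correctly oriented crossings in $\Theta_{\text{slope}}$. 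These cannot be charged to a neighbour, since $1/\cos_i$ is unbounded, so they must pay for themselves; this works once $\cos_i\le\frac23$. Only the remaining ones, with $1/\cos_i<\frac32$, are charged to an adjacent wrongly oriented crossing $j$, whose weight is $(1-\cos_j)/|\cos_j| = 1+1/|\cos_j|$. With this bookkeeping your constant $3$ is correct.

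The genuine gap is in item i) on $\Theta_{\text{outside}}$, at exactly the point you flagged. Take your subcase where the interface is present in $\{\delta\le|s|<2\delta\}$. There \eqref{eq:errorControlOutside} only guarantees a cost of order $1-\eta(s)=s^2\in[\delta^2,4\delta^2)$ per unit angle, and a flat crossing costs no more. That is the same size as $h^2=\delta^2$. So you only get $\int_{\Theta_{\text{present}}}h^2\dvectheta\le(1+O(\delta))\int_{\partial^*F\cap\{\delta\le\dist(\cdot,\partial B_1)<2\delta\}}(1-\n_{\partial^*F}\cdot\xi)\dH$. This bound is linear with a constant of order one, and shrinking $\eps$ does not improve it. Since this part of $E_{rel}[F|\xi]$ is not contained in the first term on the right of \eqref{eq:L2estimateHeight}, the factor $\frac{1}{C'}$ is lost. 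For instance, let $F$ bulge out to $|x|=1+\frac32\delta$ over a small cap $A$. The top of the bulge costs about $\frac94\delta^2\mathcal L^{d-1}(A)$, which is comparable to $\int_A h^2\dvectheta$. Smallness comes only from the lateral walls, and only an isoperimetric argument sees those.

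The fix is the paper's argument, which drops the present/absent dichotomy. For a full slice, the half segment $\{-\delta<s\le-\delta/2\}$ lies in $F\setminus B_{1+\delta/2}$; for an empty slice, $\{\delta/2\le s<\delta\}$ lies in $B_{1-\delta/2}\setminus F$. Hence $\delta\,\mathcal L^{d-1}(\Theta_{\text{outside}})\lesssim\mathcal L^d(F\setminus B_{1+\delta/2})+\mathcal L^d(B_{1-\delta/2}\setminus F)$. On $\{\dist(\cdot,\partial B_1)\ge\delta/2\}$ one has $1-\n_{\partial^*F}\cdot\xi\ge\delta^2/4$, so the relevant relative perimeters are at most $4\delta^{-2}E_{rel}[F|\xi]$. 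The relative isoperimetric inequality, with the case distinction of term $I\!I$ for the inner ball, then gives $\int_{\Theta_{\text{outside}}}h^2\dvectheta\lesssim\delta\,(\delta^{-2}E_{rel}[F|\xi])^{d/(d-1)}\le\frac{1}{2C'}E_{rel}[F|\xi]$ for $\eps\ll_\delta 1$.
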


\begin{proof}[Proof of Lemma~\ref{lem:L2estimatesBVgraphApprox}]
We split the proof into three steps. 

\textit{Step~1 (Proof of $h \in BV(\mathbb{T}_\pi)$).} We first introduce an auxiliary
height function $\tilde h\colon \partial B_1 \to [-\delta,\delta]$
by means of $\tilde h(f(s,\vec{\theta})) := h(\vec{\theta})$, $\vec{\theta} \in \mathbb{T}_\pi$.
Let $\eta \in C^\infty(\partial B_1;\Rd)$ such that
$\eta(x) \in Tan_{x}\partial B_1$ for all $x \in \partial B_1$.
Then, with the convention $\tilde\eta(x) := \eta(x{+}s(x)\frac{x}{|x|})$, $x \in \{\dist(\cdot,\partial B_1) \leq \delta\}$,
we first obtain by the chain rule and $\nabla^2s(x) = - \frac{1}{1-s(x)}(\mathrm{Id}-\nabla s(x)\otimes\nabla s(x))$
that
\begin{equation}
\begin{aligned}
\nabla\tilde\eta(x) &= (\nabla^{tan}\eta)\Big(x{+}s(x)\frac{x}{|x|}\Big)
\bigg(\mathrm{Id} - \nabla s(x) \otimes \nabla s(x) - s(x)\nabla^2s(x)\bigg)
\\&
= (\nabla^{tan}\eta)\Big(x{+}s(x)\frac{x}{|x|}\Big)
\frac{1}{1-s(x)}\bigg(\mathrm{Id} - \nabla s(x) \otimes \nabla s(x)\bigg).
\end{aligned}
\end{equation}
As a consequence, we may compute based on an integration by parts
along~$\partial B_1$, the definition of the height function~$\tilde h$,
an integration by parts within the tubular neighborhood $x \in \{\dist(\cdot,\partial B_1) < \delta\}$,
and the fact that $\tilde\eta(x) \cdot \nabla s(x) =
\tilde\eta(x) \cdot \vec{n}_{\partial B_1}(\frac{x}{|x|}) = - \tilde\eta(x) \cdot\frac{x}{|x|} = 0$,
\begin{equation}
\begin{aligned}
\langle D^{tan}\tilde{h}, \eta \rangle 
&= - \int_{\partial B_1} \tilde{h} \nabla^{tan}\cdot \eta \dS(x)
\\
&= \int_{\partial B_1} (\nabla^{tan}\cdot\eta)(x) \int_{-\delta}^{\delta} 
(\chi_F{-}\chi_{B_1})\Big(x {-} s \frac{x}{|x|}\Big) \ds \dS(x) 
\\
&= \int_{\partial B_1} \int_{-\delta}^{\delta} 
\big((\nabla\cdot\tilde\eta)(\chi_F{-}\chi_{B_1})\big)
\Big(x {-} s \frac{x}{|x|}\Big) (1 {-} s)\ds \dS(x)
\\&
= \int_{\{x\colon\dist(x,\partial B_1) < \delta\}} (\nabla\cdot\tilde\eta) (\chi_F{-}\chi_{B_1})
\frac{1}{(1{-}s)^{d-2}} \dx
\\&
= - \int_{\partial^*F \cap \{x\colon\dist(x,\partial B_1) < \delta\}} \tilde\eta \cdot \n_{\partial^*F}
\frac{1}{(1{-}s)^{d-2}} \dH
\\&
= - \int_{\partial^*F \cap \{x\colon\dist(x,\partial B_1) < \delta\}} 
\tilde\eta \cdot \frac{1}{(1{-}s)^{d-2}}\Big(\mathrm{Id} {-} \nabla s \otimes \nabla s\Big)\n_{\partial^*F} \dH.
\end{aligned}
\end{equation}
Hence, for any Borel measurable $U \subset \partial B_1$ we have the representation formula
\begin{equation}
\begin{aligned}
\label{eq:formulaBVderivative}
&D^{tan}\tilde{h} (U) 
\\&~~~
=  \frac{1}{(1{-}s)^{d-2}}\Big(\mathrm{Id} {-} \nabla s \otimes \nabla s\Big) \n_{\partial^*F} \,\mathcal{H}^1
\big(\partial^*F \cap U^\delta \cap  \{x\colon\dist(x,\partial B_1) < \delta\}\big),
\end{aligned}
\end{equation}
where $U^\delta = \{x - s \frac{x}{|x|} \colon x \in U, s \in (-\delta,\delta)\}$.
This shows $\tilde h \in BV(\partial B_1)$ and thus $h \in BV(\mathbb{T}_\pi)$. 
More precisely, introducing the unit length tangent vectors $\ta_i(f(0,\vec{\theta})) := (\partial_{\theta_i}f)(0,\vec{\theta})$,
$\vec{\theta} = (\theta_1,\ldots,\theta_{d-1}) \in \mathbb{T}_\pi$, we obtain
\begin{align}
\label{eq:formulaBVderivative2}
\int_{\mathbb{T}_\pi} \zeta e_i \cdot \,\mathrm{d}Dh
= \int_{\partial B_1} \tilde\zeta \ta_i \cdot \,\mathrm{d}D^{tan}\tilde{h}
\end{align}
for all $\zeta \in C^\infty(\mathbb{T}_\pi)$ and $\tilde{\zeta}(f(0,\vec{\theta})):= \zeta(\vec{\theta})$,
$\vec{\theta}\in \mathbb{T}_\pi$, and all $i = 1,\ldots,d{-}1$.

\textit{Step~2 (Proof of~\emph{\eqref{eq:L2estimateDerivativeHeightL2}--\eqref{eq:L2estimateDerivativeHeight2}}).}
We deduce from~\eqref{eq:formulaBVderivative} that
\begin{align}
\|D^sh\|_{TV(\mathbb{T}_\pi)} \leq 
\mathcal{H}^1\big(\partial^*F \cap (\supp D^sh)^\delta \cap  \{x\colon\dist(x,\partial B_1) < \delta\}\big).
\end{align}
Since $\supp D^sh \subset \mathcal{N}$ and $\mathcal{L}^{d-1}(\mathcal{N}) = 0$, it further follows
\begin{equation}
\begin{aligned}
&\mathcal{H}^{d-1}\big(\partial^*F \cap (\supp D^sh)^\delta \cap  \{x\colon\dist(x,\partial B_1) < \delta\}\big)
\\&
\leq \mathcal{H}^{d-1}\bigg(\partial^*F \cap \mathcal{N}^\delta 
\cap  \{x\colon\dist(x,\partial B_1) < \delta\}
\cap \Big\{\n_{\partial^*F}(x)\cdot\n_{\partial B_1}\Big(\frac{x}{|x|}\Big) \neq 0\Big\}\bigg)
\\&~~~
+ \mathcal{H}^{d-1}\bigg(\partial^*F \cap \mathcal{N}^\delta 
\cap  \{x\colon\dist(x,\partial B_1) < \delta\}
\cap \Big\{\n_{\partial^*F}(x)\cdot\n_{\partial B_1}\Big(\frac{x}{|x|}\Big) = 0\Big\}\bigg)
\\&
= \int_{\mathbb{T}_\pi\cap\mathcal{N}}
\sum_{\substack{s \in \partial^*F_{\vec{\theta}} \\ \n_{\partial^*F}(f(s,\vec{\theta})) \cdot \n_{\partial B_1}(f(0,\vec{\theta})) \neq 0}}
\frac{(1{-}s)^{d{-}1}}{|\n_{\partial^*F}(f(s,\vec{\theta})) \cdot \n_{\partial B_1}(f(0,\vec{\theta}))|} \dvectheta
\\&~~~
+ \mathcal{H}^{d-1}\bigg(\partial^*F \cap \mathcal{N}^\delta 
\cap  \{x\colon\dist(x,\partial B_1) < \delta\}
\cap \Big\{\n_{\partial^*F}(x)\cdot\n_{\partial B_1}\Big(\frac{x}{|x|}\Big) = 0\Big\}\bigg)
\\&
= \mathcal{H}^{d-1}\bigg(\partial^*F \cap \mathcal{N}^\delta 
\cap  \{x\colon\dist(x,\partial B_1) < \delta\}
\cap \Big\{\n_{\partial^*F}(x)\cdot\n_{\partial B_1}\Big(\frac{x}{|x|}\Big) = 0\Big\}\bigg).
\end{aligned}
\end{equation}
Hence, \eqref{eq:L2estimateDerivativeHeight2} follows from the previous two displays.

For a proof of~\eqref{eq:L2estimateDerivativeHeightL2}
and~\eqref{eq:L2estimateDerivativeHeightL1}, we first record that
by~\eqref{eq:formulaBVderivative}, \eqref{eq:formulaBVderivative2}
and the notational shorthand $P_{Tan_{x}\partial B_1}$ for the orthogonal
projection onto~$Tan_{x}\partial B_1$, $x \in \partial B_1$,
\begin{equation}
\begin{aligned}
\label{eq:formulaAbsContPart}
\nabla h(\vec{\theta}) 
= \sum_{s \in \partial^*F_{\vec{\theta}}} 
\frac{1{-}s}{|\n_{\partial^*F}(f(s,\vec{\theta})) \cdot \n_{\partial B_1}(f(0,\vec{\theta}))|}
P_{Tan_{f(0,\vec{\theta})}\partial B_1} \big(\n_{\partial^*F}(f(s,\vec{\theta}))\big)
\end{aligned}
\end{equation}
for all $\vec{\theta} \in \mathbb{T}_\pi \setminus \mathcal{N}$. In particular,
\begin{align}
\label{eq:constantOutside}
\nabla h(\vec{\theta})  = 0, \quad \vec{\theta} \in \Theta_{\text{outside}}.
\end{align}

Furthermore, by the Pythagorean theorem,
\begin{equation}
\begin{aligned}
\label{eq:smallSlopeRegime}
q(s,\vec{\theta}) &:= \frac{\big|P_{Tan_{f(0,\vec{\theta})}\partial B_1} \big(\n_{\partial^*F}(f(s,\vec{\theta}))\big)\big|}
{|\n_{\partial^*F}(f(s,\vec{\theta})) \cdot \n_{\partial B_1}(f(0,\vec{\theta}))|} \leq c
\\&~~~~~~~~~~~~~~~~~~~
\;\Leftrightarrow\;
|\n_{\partial^*F}(f(s,\vec{\theta})) \cdot \n_{\partial B_1}(f(0,\vec{\theta}))| \geq \frac{1}{\sqrt{1{+}c^2}}.
\end{aligned}
\end{equation}
We can therefore perform the following case distinction for fixed $\vec{\theta} \in \Theta_{\text{graph}} \cup  \Theta_{\text{slope}}$
and fixed $s \in \partial^*F_{\vec{\theta}}$:
\begin{itemize}
\item[\textbf{1)}] If $\n_{\partial^*F}(f(s,\vec{\theta})) \cdot \n_{\partial B_1}(f(0,\vec{\theta})) < 0$, then due to
$e_{slope}(s,\theta) \geq 1-\delta^2$ and 
$|P_{Tan_{f(0,\vec{\theta})}\partial B_1} (\n_{\partial^*F}(f(s,\vec{\theta})))| \leq 1$
\begin{align}
(1{-}s) q(s,\vec{\theta}) \leq (1{+}O(\delta))
\frac{(1 {-} s)^{d{-}1}}{|\n_{\partial^*F}(f(s,\vec{\theta})) \cdot \n_{\partial B_1}(f(0,\vec{\theta}))|}
e_{slope}(s,\vec{\theta}).
\end{align}
\item[\textbf{2a)}] If $\vec{\theta} \in \Theta_{\text{Fuglede}} \subset \Theta_{\text{graph}}$,
in particular by definition $q(s,\vec{\theta}) \leq 1$, then due to
$|P_{Tan_{f(0,\vec{\theta})}\partial B_1} (\n_{\partial^*F}(f(s,\vec{\theta})))|^2 =
1 - |\n_{\partial^*F}(f(s,\vec{\theta})) \cdot \n_{\partial B_1}(f(0,\vec{\theta}))|^2
\leq 2 e_{slope}(s,\vec{\theta})$
and~\eqref{eq:smallSlopeRegime}
\begin{equation}
\begin{aligned}
&(1 {-} s)^2 q^2(s,\vec{\theta}) 
\\&~~~
\leq \sqrt{2} \frac{(1 {-} s)^2}{|\n_{\partial^*F}(f(s,\vec{\theta})) \cdot \n_{\partial B_1}(f(0,\vec{\theta}))|}
\Big(1 {-} \big(\n_{\partial^*F}(f(s,\vec{\theta})) \cdot \n_{\partial B_1}(f(0,\vec{\theta}))\big)^2\Big)
\\&~~~
\leq 2\sqrt{1{+}c^2} (1{+}O(\delta))
\frac{(1 {-} s)^{d{-}1}}{|\n_{\partial^*F}(f(s,\vec{\theta})) \cdot \n_{\partial B_1}(f(0,\vec{\theta}))|} 
e_{slope}(s,\vec{\theta}).
\end{aligned}
\end{equation}
\item[\textbf{2b)}] If $\vec{\theta} \in \Theta_{\text{slope}}$, 
$\n_{\partial^*F}(f(s,\vec{\theta})) \cdot \n_{\partial B_1}(f(0,\vec{\theta})) > 0$,
and $q(s,\vec{\theta}) \leq 1$, then choose first an adjacent $\tilde s \in \partial^*F_{\vec{\theta}} \setminus\{s\}$ such that
$\n_{\partial^*F}(f(\tilde s,\vec{\theta})) \cdot  \n_{\partial B_1}(f(0,\vec{\theta})) < 0$ and estimate
\begin{equation}
\begin{aligned}
&(1{-}s)q(s,\vec{\theta}) 
\\&~~~
\leq 1{+}O(\delta) 
\leq (1 {+} O(\delta)) \frac{(1{-}\tilde{s})^{d{-}1}}
{|\n_{\partial^*F}(f(\tilde s,\vec{\theta})) \cdot  \n_{\partial B_1}(f(0,\vec{\theta}))|}
e_{slope}(\tilde{s},\vec{\theta}).
\end{aligned}
\end{equation}
\item[\textbf{3)}] Finally, if $\n_{\partial^*F}(f(s,\vec{\theta})) \cdot \n_{\partial B_1}(f(0,\vec{\theta})) > 0$
					and $q(s,\vec{\theta}) > 1$, then due to 
					$|P_{Tan_{f(0,\vec{\theta})}\partial B_1} (\n_{\partial^*F}(f(s,\vec{\theta})))| \leq 1$
					and, by the equivalence~\eqref{eq:smallSlopeRegime}, $1{-}\frac{1}{\sqrt{1{+}c^2}} 
					\leq 1 - \n_{\partial^*F}(f(s,\vec{\theta})) \cdot \n_{\partial B_1}(f(0,\vec{\theta}))
					=e_{slope}(s,\vec{\theta})$ we obtain
\begin{equation}
\begin{aligned}
&(1{-}s)q(s,\vec{\theta}) 
\\&~~~
\leq \Big(1{-}\frac{1}{\sqrt{1{+}c^2}}\Big)^{-1}
\frac{1 {-} s}{|\n_{\partial^*F}(f(s,\vec{\theta})) \cdot \n_{\partial B_1}(f(0,\vec{\theta}))|}
\big(1 - \n_{\partial^*F}(f(s,\vec{\theta})) \cdot \n_{\partial B_1}(f(0,\vec{\theta}))\big)
\\&~~~
\leq \Big(1{-}\frac{1}{\sqrt{1{+}c^2}}\Big)^{-1} (1{+}O(\delta))
\frac{(1 {-} s)^{d{-}1}}{|\n_{\partial^*F}(f(s,\vec{\theta})) \cdot \n_{\partial B_1}(f(0,\vec{\theta}))|} 
e_{slope}(s,\vec{\theta}).
\end{aligned}
\end{equation}
\end{itemize}

Item~\textbf{2a)} with~\eqref{eq:formulaAbsContPart} immediately yields~\eqref{eq:L2estimateDerivativeHeightL2}.
Furthermore, we carefully note that for $\vec{\theta} \in \Theta_{\text{slope}}$
one can always arrange the choice of an adjacent~$\tilde s \in \partial^*F_{\vec{\theta}} \setminus\{s\}$ 
from item~\textbf{2b)} such that one uses the error control given by item~\textbf{1)} 
for that particular~$\tilde s$
at most $2$ times through an application of item~\textbf{2b)}, hence overall at most $3$ times.
In summary, combining items~\textbf{1)}, \textbf{2b)} and~\textbf{3)} 
with~\eqref{eq:formulaAbsContPart} and~\eqref{eq:constantOutside} yields~\eqref{eq:L2estimateDerivativeHeightL1}.

\textit{Step~3 (Proof of~\eqref{eq:L2estimateHeight}).}
First, we claim that for every $C' \in (1,\infty)$ one may choose
 $\delta \ll 1$ and $\eps \ll_\delta 1$ such that
\begin{align}
\int_{\Theta_{\text{outside}} \cup \Theta_{\text{slope}}} 
h^2 \dvectheta &\leq \frac{1}{C'} E_{rel}[F|\xi].
\end{align}
For the case of angles in~$\Theta_{\text{slope}}$, this follows
from the simple estimate
\begin{align}
\int_{\Theta_{\text{slope}}} h^2 \dvectheta
\leq \delta^2 \mathcal{L}^{d{-}1}(\Theta_{\text{slope}})
\end{align}
and the already established estimate~\eqref{eq:controlAmountOfOVerlaps}.
For the case of angles in~$\Theta_{\text{outside}}$, one may argue based on
\begin{equation}
\begin{aligned}
\int_{\Theta_{\text{outside}}} h^2 \dvectheta
&\lesssim \mathcal{L}^{d{-}1}(\Theta_{\text{outside}})\delta^2
\\&
\lesssim \delta \Big(\mathcal{L}^d(B_{1-\frac{\delta}{2}}\setminus F)
+ \mathcal{L}^d\big((\Rd\setminus B_{1+\frac{\delta}{2}}) \cap F\big)\Big)
\end{aligned}
\end{equation}
as well as an argument relying on the relative isoperimetric
inequality in~$B_{1-\frac{\delta}{2}}$ resp.\ $\Rd\setminus B_{1+\frac{\delta}{2}}$
analogous to the proof of Lemma~\ref{lem:BVgraphApprox}.

For the remaining contribution from~$\Theta_{\text{graph}}$, we simply observe
\begin{equation}
\begin{aligned}
&\int_{\Theta_{\text{graph}}} h^2 \dvectheta
\\&~~~
\leq (1{+}O(\delta))\int_{\Theta_{\text{graph}}} 
\frac{(1 {-} h(\vec{\theta}))^{d{-}1}}{|
\n_{\partial^*F}(f(h(\vec{\theta}),\vec{\theta})) \cdot \n_{\partial B_1}(f(0,\vec{\theta}))|}
e_{height}(h(\vec{\theta}),\vec{\theta})  \dvectheta.
\end{aligned}
\end{equation}
This concludes the proof.
\end{proof}

\subsection{From $BV$ graph approximation to smooth graph approximation}
In order to arrive at a situation where we will apply a Fuglede-type argument
to close the estimates, we need to regularize our $BV$ graph approximation.
To this end, let $\rho \in C^\infty_{cpt}(\Rd[];[0,1])$ be a standard radial symmetric mollifier 
with unit mass and support in~$\{x'\in\Rd[d-1]\colon |x'|\leq 1\}$, and let $\lambda \in (0,1)$
be an upper bound for the relative energy in the sense that
\begin{align}
E_{rel}[F|\xi] \leq \lambda^2 \ll 1.
\end{align}
We think of $\lambda$ as
\begin{align}
\label{eq:mollifierScale}
\lambda^{2(d{-}1)} := M^2E_{rel}[F|\xi],
\quad\text{with } M \gg 1 \text{ and } \eps \ll_M 1.
\end{align}
We then define a smooth height function $h_\lambda\colon \mathbb{T}_\pi \to [-\delta,\delta]$
by means of (again, we remind the reader that we identify $h$, cf.\ \eqref{def:height},
with its periodic extension from $\mathbb{T}_\pi = [0,\pi)^{d-2}{\times}[0,2\pi)$
to~$\mathbb{R}^{d-1}$)
\begin{align}
\label{eq:mollifiedHeigth}
h_\lambda(\vec{\theta}) := \int_{\Rd[d-1]} \frac{1}{\lambda^{d{-}1}}
\rho\Big(\frac{\vec{\theta}-\vec{\theta'}}{\lambda}\Big) h(\vec{\theta'}) \dvectildetheta.
\end{align}

\begin{lemma}
\label{lem:L2estimatesSmoothGraphApprox}
For each $C' \in (1,\infty)$ and $c \in (0,1]$ one may choose the constants $M \gg_{C',c} 1$, $\delta \ll 1$, 
and $\eps \ll_{\delta} 1$ such that for $\lambda$ from~\eqref{eq:mollifierScale}
with corresponding mollified height function~$h_\lambda$ from~\eqref{eq:mollifiedHeigth}
we have an $L^2$ estimate
for the height of the smooth graph approximation (recall the notation from~\ref{subsubsec:coercivity})
in the form of
\begin{equation}
\begin{aligned}
\label{eq:L2estimateSmoothHeight}
&\int_{\mathbb{T}_\pi} h_\lambda^2 \dvectheta 
\\&~~~
\leq \big(1{+}O(\delta)\big)  \int_{\mathbb{T}_\pi\setminus \mathcal{N}} 
\sum_{s \in \partial^*F_{\vec{\theta}}}  \frac{(1 {-} s)^{d{-}1}}{|
\n_{\partial^*F}(f(s,\vec{\theta})) \cdot \n_{\partial B_1}(f(0,\vec{\theta}))|}
e_{height}(s,\vec{\theta})  \dvectheta
\\&~~~~~~
+ \frac{1}{C'} E_{rel}[F|\xi],
\end{aligned}
\end{equation}
as well as an $L^2$ estimate for its derivative in the form of
\begin{equation}
\begin{aligned}
\label{eq:L2estimateDerivativeSmoothHeight}
&\int_{\mathbb{T}_\pi} |\nabla h_\lambda|^2 \dvectheta
\\&~~~
\leq \Big(2\sqrt{1{+}c^2} {+} O(\delta) {+} \frac{1}{C'}\Big) 
\\&~~~~~~~~~
\times
\int_{(\mathbb{T}_\pi\setminus \mathcal{N}) \cap \Theta_{\emph{\text{Fuglede}}}}  
\sum_{s \in \partial^*F_{\vec{\theta}}}  \frac{(1 {-} s)^{d{-}1}}{|
\n_{\partial^*F}(f(s,\vec{\theta})) \cdot \n_{\partial B_1}(f(0,\vec{\theta}))|}
e_{slope}(s,\vec{\theta}) \dvectheta
\\&~~~~~~
+ 2\int_{\mathbb{T}_\pi\setminus (\mathcal{N}  \cup \Theta_{\emph{\text{Fuglede}}})}  
\sum_{s \in \partial^*F_{\vec{\theta}}}  \frac{(1 {-} s)^{d{-}1}}{|
\n_{\partial^*F}(f(s,\vec{\theta})) \cdot \n_{\partial B_1}(f(0,\vec{\theta}))|}
e_{slope}(s,\vec{\theta}) \dvectheta
\\&~~~~~~
+ 2\mathcal{H}^{d-1}\bigg(\partial^*F \cap \{\dist(\cdot,\partial B_1) < \delta\} \cap \mathcal{N}^\delta
\cap \Big\{x \colon \n_{\partial^*F}(x)\cdot \n_{\partial B_1}\Big(\frac{x}{|x|}\Big)=0\Big\}\bigg).
\end{aligned}
\end{equation}
In other words, recalling~\eqref{eq:errorControlSlice} and~\eqref{eq:errorControlVertical},
the analogue of the sharp lower bound of~\eqref{eq:FugledeAux5} holds.
\end{lemma}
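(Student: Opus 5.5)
The plan is to deduce both estimates from Lemma~\ref{lem:L2estimatesBVgraphApprox}, using elementary properties of mollification on the periodic domain.

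\textbf{Height estimate.} The estimate~\eqref{eq:L2estimateSmoothHeight} is immediate. Since $\rho\geq 0$ has unit mass, Jensen's inequality (or Young's inequality for convolutions on the torus, $h$ being periodic) gives
\[
\int_{\mathbb{T}_\pi} h_\lambda^2 \dvectheta \leq \int_{\mathbb{T}_\pi} h^2 \dvectheta ,
\]
and we then invoke~\eqref{eq:L2estimateHeight}.

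\textbf{Derivative estimate: splitting.} Decompose
\[
Dh = \nabla h\,\chi_{\Theta_{\text{Fuglede}}}\dvectheta + \nabla h\,\chi_{\mathbb{T}_\pi\setminus\Theta_{\text{Fuglede}}}\dvectheta + D^sh =: \mu_1+\mu_2+\mu_3 ,
\]
so that $\nabla h_\lambda = \rho_\lambda*\mu_1 + \rho_\lambda*(\mu_2+\mu_3)$. By the elementary inequality $|a+b|^2\leq (1+\tau)|a|^2 + (1+\tau^{-1})|b|^2$, it suffices to bound the two pieces separately.

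\textbf{First piece.} For the $L^2$ part, Jensen/Young again gives $\|\rho_\lambda*\mu_1\|_{L^2}^2\leq \int_{\Theta_{\text{Fuglede}}}|\nabla h|^2 \dvectheta$. This is bounded by~\eqref{eq:L2estimateDerivativeHeightL2} with the constant $2\sqrt{1+c^2}+O(\delta)$, and the factor $(1+\tau)$ is absorbed into the additive $\frac{1}{C'}$ once $\tau$ is small depending on $C'$ and $c$.

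\textbf{Second piece.} For the measure part, use the crude bound
\[
\|\rho_\lambda*\nu\|_{L^2}^2\leq \|\rho_\lambda*\nu\|_{L^\infty}\,\|\rho_\lambda*\nu\|_{L^1}\leq \|\rho\|_{L^\infty}\lambda^{-(d-1)}\,|\nu|(\mathbb{T}_\pi)^2 ,
\]
with $\nu=\mu_2+\mu_3$. By~\eqref{eq:L2estimateDerivativeHeightL1} and~\eqref{eq:L2estimateDerivativeHeight2}, together with~\eqref{eq:errorControlSlice} and~\eqref{eq:errorControlVertical}, we have $|\nu|(\mathbb{T}_\pi)\lesssim_c \mathcal{R}$, where $\mathcal{R}\leq E_{rel}[F|\xi]$ denotes the sum of the non-Fuglede slope contribution and the vertical contribution appearing in the last two lines of~\eqref{eq:L2estimateDerivativeSmoothHeight}. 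Hence the second piece is bounded by
\[
C(c)(1+\tau^{-1})\lambda^{-(d-1)}\,E_{rel}[F|\xi]\,\mathcal{R} .
\]
With the choice~\eqref{eq:mollifierScale}, namely $\lambda^{d-1}=M E_{rel}^{1/2}$, this becomes $C(c)(1+\tau^{-1})M^{-1}E_{rel}^{1/2}\mathcal{R}$. Since $E_{rel}\leq\eps\ll_M 1$, this is at most $\mathcal{R}$, comfortably below the factor $2$ allowed in the statement.

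\textbf{Main obstacle.} The delicate point is this last bookkeeping. The $TV$ controls are only linear in the excess, so the gain must come from the factor $\lambda^{-(d-1)}|\nu|$, and it is not a priori clear that the scale $\lambda$ from~\eqref{eq:mollifierScale} produces smallness rather than largeness. I would check this carefully: the estimate $\lambda^{-(d-1)}E_{rel}\,\mathcal{R}=M^{-1}E_{rel}^{1/2}\mathcal{R}$ relies on both the smallness of $E_{rel}$ and $M\gg1$. If the exponents failed to match, the fallback is to interpolate with the uniform bound $|h|\leq\delta$, i.e.\ to use $\|\rho_\lambda*\nu\|_{L^\infty}\lesssim\delta/\lambda$ obtained by differentiating the mollifier instead.

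\textbf{Periodization and constants.} Finally, I would verify that mollifying the periodic extension on $\mathbb{T}_\pi$ does not create additional singular jumps at the coordinate seams. The constraints are compatible there, since the map $f$ only degenerates on a null set. The constants are then chosen in the order $\tau$ (depending on $C'$ and $c$), then $M$, $\delta$ and $\eps$.
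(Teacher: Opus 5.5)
Your proposal is correct and follows essentially the paper's argument: Jensen for the height bound; splitting $Dh$ into the absolutely continuous part on $\Theta_{\text{Fuglede}}$ and a remainder via a weighted Young inequality; Jensen plus~\eqref{eq:L2estimateDerivativeHeightL2} for the first part; and, for the remainder, the crude $\lambda^{-(d-1)}$ bound at the scale~\eqref{eq:mollifierScale} combined with the total variation controls~\eqref{eq:L2estimateDerivativeHeightL1}--\eqref{eq:L2estimateDerivativeHeight2}. Two things differ in your favour: since $|a+b|^2\leq(1+\tau)|a|^2+(1+\tau^{-1})|b|^2$ holds unconditionally, you can skip the paper's convex function $G$ and its preliminary global Lipschitz bound $|\nabla h_\lambda|\leq 1$ (which the paper needs only to apply~\eqref{eq:refinedTriangleInequ}), and your $L^\infty$--$L^1$ interpolation gains an extra factor $E_{rel}[F|\xi]^{1/2}$ over the paper's pure $L^\infty$ bound, so the exponents do match and the worry in your ``main obstacle'' paragraph is unfounded.
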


\begin{proof}[Proof of Lemma~\ref{lem:L2estimatesSmoothGraphApprox}]
Since~\eqref{eq:L2estimateSmoothHeight} follows directly from~\eqref{eq:L2estimateHeight},
we immediately turn our attention to the proof of~\eqref{eq:L2estimateDerivativeSmoothHeight}.

Define the convex function $G\colon[0,\infty) \to [0,\infty)$
\begin{align}
\label{eq:recessionFct}
G(\ell) := 
\begin{cases}
\ell^2, &\text{if } \ell \in [0,1], 
\\
2\ell - 1, &\text{if } \ell \in (1,\infty).
\end{cases}
\end{align}
Note that 
\begin{align}
\label{eq:generalTriangleInequ}
G(\ell {+} \tilde\ell) \leq 2G(\ell) + 2G(\tilde\ell),
\quad \ell,\tilde\ell \in [0,\infty),
\end{align}
and
\begin{align}
\label{eq:refinedTriangleInequ}
G(\ell {+} \tilde\ell)
\leq (1 {+} \kappa^2)\ell^2 + (1 {+} \kappa^{-2})\tilde\ell^2,
\quad \ell,\tilde\ell \in [0,1], \ell + \tilde \ell \leq 1, \kappa \in (0,1). 
\end{align}
Since 
\begin{align}
\label{eq:formulaDerivativeSmoothGraph}
\nabla h_\lambda(\theta) = \int_{\Rd[d-1]} \frac{1}{\lambda^{d{-}1}}
\rho\Big(\frac{\vec{\theta}-\vec{\theta'}}{\lambda}\Big) \nabla h(\vec{\theta'}) \dvectildetheta
+ \int_{\Rd[d-1]} \frac{1}{\lambda^{d{-}1}}
\rho\Big(\frac{\vec{\theta}-\vec{\theta'}}{\lambda}\Big)\,\mathrm{d}D^sh(\vec{\theta'}),
\end{align}
it follows from non-negativity of~$\rho$, monotonicity of~$G$, \eqref{eq:generalTriangleInequ}, 
and Jensen's inequality
\begin{equation}
\label{eq:LipschitzBoundAux1}
\begin{aligned}
&G(|\nabla h_\lambda|) 
\\&~~~
\leq G\bigg(\int_{\Rd[d-1]} \frac{1}{\lambda^{d{-}1}}
\rho\Big(\frac{\vec{\theta}-\vec{\theta'}}{\lambda}\Big) \big|\nabla h(\vec{\theta'})\big| \dvectildetheta
+ \int_{\Rd[d-1]} \frac{1}{\lambda^{d{-}1}}
\rho\Big(\frac{\vec{\theta}-\vec{\theta'}}{\lambda}\Big)\,\mathrm{d}\big|D^sh\big|(\vec{\theta'})\bigg)
\\&~~~
\leq \int_{\Rd[d-1]} 2G\big(\big|\nabla h(\vec{\theta'})\big|\big)
 \frac{1}{\lambda^{d{-}1}}\rho\Big(\frac{\vec{\theta}-\vec{\theta'}}{\lambda}\Big)  \dvectildetheta
\\&~~~~~~
+ 2 G\bigg(\int_{\Rd[d-1]} \frac{1}{\lambda^{d{-}1}}
\rho\Big(\frac{\vec{\theta}-\vec{\theta'}}{\lambda}\Big)\,\mathrm{d}\big|D^sh\big|(\vec{\theta'})\bigg).
\end{aligned}
\end{equation}
Since, by~\eqref{eq:L2estimateDerivativeHeight2},
\begin{align}
\label{eq:LipschitzBoundAux2}
\int_{\Rd[d-1]} \frac{1}{\lambda^{d{-}1}}
\rho\Big(\frac{\vec{\theta}-\vec{\theta'}}{\lambda}\Big)\,\mathrm{d}\big|D^sh\big|(\vec{\theta'})
\leq \frac{1}{\lambda^{d-1}}\int_{\partial^*F \cap \mathcal{N}^\delta} 
1 - \n_{\partial^*F} \cdot \xi \dH
\end{align}
and since by~\eqref{eq:L2estimateDerivativeHeightL2}--\eqref{eq:L2estimateDerivativeHeightL1} 
and $G(\ell) \leq 2\min\{\ell,\ell^2\}$
\begin{equation}
\begin{aligned}
\label{eq:LipschitzBoundAux3}
&\int_{\Rd[d-1]} 2G\big(\big|\nabla h(\vec{\theta'})\big|\big)
 \frac{1}{\lambda^{d{-}1}}\rho\Big(\frac{\vec{\theta}-\vec{\theta'}}{\lambda}\Big)  \dvectildetheta
\\&~~~
\leq 4\bigg(\max\Big\{3,\Big(1{-}\frac{1}{\sqrt{1{+}c^2}}\Big)^{-1}\Big\} {+} O(\delta)\bigg)
\\&~~~~~~~~~
\times
\frac{1}{\lambda^{d-1}}\int_{(\partial^*F \setminus \mathcal{N}^\delta) \cap \{\dist(\cdot,\partial B_1) < \delta\}} 
1 - \n_{\partial^*F} \cdot \xi \dH,
\end{aligned}
\end{equation}
we obtain for $M \gg_{c} 1$ the global Lipschitz bound
\begin{equation}
\label{eq:LipschitzBound}
\begin{aligned}
&G(|\nabla h_\lambda|) 
\\&~~~
\leq G\bigg(\int_{\Rd[d-1]} \frac{1}{\lambda^{d{-}1}}
\rho\Big(\frac{\vec{\theta}-\vec{\theta'}}{\lambda}\Big) \big|\nabla h(\vec{\theta'})\big| \dvectildetheta
+ \int_{\Rd[d-1]} \frac{1}{\lambda^{d{-}1}}
\rho\Big(\frac{\vec{\theta}-\vec{\theta'}}{\lambda}\Big)\,\mathrm{d}\big|D^sh\big|(\vec{\theta'})\bigg)
\\&~~~
\leq \sqrt{E_{rel}[F|\xi]} \leq 1,
\end{aligned}
\end{equation}
i.e., $|\nabla h_\lambda|^2=G(|\nabla h_\lambda|) \leq 1$.
Denoting by $\widetilde\Theta_{\text{Fuglede}} \subset \Rd[d-1]$ the
periodic extension of $\Theta_{\text{Fuglede}} \subset \mathbb{T}_\pi$ to $\Rd[d-1]$,
we capitalize on the previous estimate by using~\eqref{eq:refinedTriangleInequ}
instead of~\eqref{eq:generalTriangleInequ} to get, for all $\kappa \in (0,1)$, 
\begin{equation}
\label{eq:sobolevEstimateAux1}
\begin{aligned}
&\int_{\mathbb{T}_\pi} |\nabla h_\lambda|^2 \dvectheta
\\&~~~
\leq \int_{\mathbb{T}_\pi}
\int_{\widetilde\Theta_{\text{Fuglede}}} (1{+}\kappa^2)\big|\nabla h(\vec{\theta'})\big|^2
 \frac{1}{\lambda^{d{-}1}}\rho\Big(\frac{\vec{\theta}-\vec{\theta'}}{\lambda}\Big)  \dvectildetheta
\dvectheta
\\&~~~~~~
+ \int_{\mathbb{T}_\pi} (1{+}\kappa^{-2})
\bigg(\int_{\Rd[d-1]\setminus\widetilde\Theta_{\text{Fuglede}}}\big|\nabla h(\vec{\theta'})\big|
 \frac{1}{\lambda^{d{-}1}}\rho\Big(\frac{\vec{\theta}-\vec{\theta'}}{\lambda}\Big)  \dvectildetheta\bigg)^2
\dvectheta
\\&~~~~~~
+ \int_{\mathbb{T}_\pi} 
(1 {+} \kappa^{-2}) \bigg(\int_{\Rd[d-1]} \frac{1}{\lambda^{d{-}1}}
\rho\Big(\frac{\vec{\theta}-\vec{\theta'}}{\lambda}\Big)\,\mathrm{d}\big|D^sh\big|(\vec{\theta'})\bigg)^2
\dvectheta.
\end{aligned}
\end{equation}
Choosing $\kappa \ll_{C'} 1$ such that $(1{+}\kappa^2)2\sqrt{1{+}c^2} \leq 2\sqrt{1{+}c^2} + \frac{1}{C'}$, 
we thus obtain from~\eqref{eq:L2estimateDerivativeHeightL2}
\begin{equation}
\label{eq:sobolevEstimateAux2}
\begin{aligned}
&\int_{\mathbb{T}_\pi}
\int_{\widetilde\Theta_{\text{Fuglede}}} (1{+}\kappa^2)\big|\nabla h(\vec{\theta'})\big|^2
 \frac{1}{\lambda^{d{-}1}}\rho\Big(\frac{\vec{\theta}-\vec{\theta'}}{\lambda}\Big)  \dvectildetheta
\dvectheta
\\&~~~
\leq \text{ first r.h.s.\ term of }\eqref{eq:L2estimateDerivativeSmoothHeight}.
\end{aligned}
\end{equation}
Furthermore, we may choose $M \gg_{C',c} 1$ such that~\eqref{eq:L2estimateDerivativeHeightL1}
and~\eqref{eq:errorControlSlice} imply
\begin{equation}
\label{eq:sobolevEstimateAux4}
\begin{aligned}
&\int_{\mathbb{T}_\pi} (1{+}\kappa^{-2})
\bigg(\int_{\Rd[d-1]\setminus\widetilde\Theta_{\text{Fuglede}}}\big|\nabla h(\vec{\theta'})\big|
 \frac{1}{\lambda^{d{-}1}}\rho\Big(\frac{\vec{\theta}-\vec{\theta'}}{\lambda}\Big)  \dvectildetheta\bigg)^2
\dvectheta
\\&~~~
\leq \mathcal{L}^{d{-}1}(\mathbb{T}_\pi)(1{+}\kappa^{-2})\frac{1}{M^2E_{rel}[F|\xi]}
\bigg(\int_{\mathbb{T}_\pi \setminus \Theta_{\emph{\text{Fuglede}}}} |\nabla h| \dvectheta\bigg)^2
\\&~~~
\leq \text{ second r.h.s.\ term of }\eqref{eq:L2estimateDerivativeSmoothHeight}.
\end{aligned}
\end{equation}
Finally, by choosing $M \gg_{C'} 1$ it follows from~\eqref{eq:LipschitzBoundAux2}, \eqref{eq:L2estimateDerivativeHeight2}
and $\mathcal{H}^{d-1}(\partial^*F \cap \{\dist(\cdot,\partial B_1) < \delta\} \cap \mathcal{N}^\delta
\cap \{x \colon \n_{\partial^*F}(x)\cdot \n_{\partial B_1}(\frac{x}{|x|})=0\}) \leq E_{rel}[F|\xi]$
\begin{equation}
\label{eq:sobolevEstimateAux3}
\begin{aligned}
&\int_{\mathbb{T}_\pi} 
(1 {+} \kappa^{-2}) \bigg(\int_{\Rd[d-1]} \frac{1}{\lambda^{d{-}1}}
\rho\Big(\frac{\vec{\theta}-\vec{\theta'}}{\lambda}\Big)\,\mathrm{d}\big|D^sh\big|(\vec{\theta'})\bigg)^2
\dvectheta
\\&~~~
\leq \mathcal{L}^{d{-}1}(\mathbb{T}_\pi)(1{+}\kappa^{-2})\frac{1}{M^2E_{rel}[F|\xi]}
\\&~~~~~~
\times \mathcal{H}^{d-1}\bigg(\partial^*F \cap \{\dist(\cdot,\partial B_1) < \delta\} \cap \mathcal{N}^\delta
\cap \Big\{x \colon \n_{\partial^*F}(x)\cdot \n_{\partial B_1}\Big(\frac{x}{|x|}\Big)=0\Big\}\bigg)^2
\\&~~~
\leq \text{ third r.h.s.\ term of }\eqref{eq:L2estimateDerivativeSmoothHeight}.
\end{aligned}
\end{equation}
Hence, \eqref{eq:L2estimateDerivativeSmoothHeight} follows from the previous three estimates.
\end{proof}

The estimates from Lemma~\ref{lem:L2estimatesBVgraphApprox} and
Lemma~\ref{lem:L2estimatesSmoothGraphApprox} allow us to pass from
the $BV$ graph approximation~$\chi_h$ of~$\chi_F$ to the smooth graph approximation~$\chi_{h_\lambda}$
of~$\chi_F$ defined by
\begin{align}
\chi_{h_{\lambda}}(f(s,\vec{\theta})) := \chi_{B_1}(f(s,\vec{\theta})) 
												- \chi_{\{(s,\vec{\theta})\colon 0 \leq s \leq h_{\lambda}(\vec{\theta})\}}(s,\vec{\theta})
												+ \chi_{\{(s,\vec{\theta})\colon h_{\lambda}(\vec{\theta}) \leq s \leq 0\}}(s,\vec{\theta}).
\end{align}

\begin{lemma}
\label{lem:smoothGraphApprox}
For each $C' \in (1,\infty)$ one may choose
$M \gg 1$, $\delta \ll 1$, and $\eps \ll_{M} 1$ such that for $\lambda$ from~\eqref{eq:mollifierScale}
with corresponding mollified height function~$h_\lambda$ from~\eqref{eq:mollifiedHeigth}
it holds
\begin{align}
\label{eq:fromBVtoSmoothGraphApproximation}
-\int_{\Rd} (\chi_h - \chi_{h_\lambda}) (\nabla\cdot\xi + (d{-}1)) \dx 
\geq - \frac{1}{C'} E_{rel}[F|\xi].
\end{align}
\end{lemma}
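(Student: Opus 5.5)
Since both $\chi_h$ and $\chi_{h_\lambda}$ coincide with $\chi_{B_1}$ outside the tubular neighborhood $\{\dist(\cdot,\partial B_1)<\delta\}$, we compute the integral in the coordinates $(s,\vec{\theta})$. By~\eqref{eq:VolumeIntegrand} the bulk integrand there is $\nabla\cdot\xi+(d{-}1)=-(d{+}1)s$. Using the coarea formula exactly as in~\eqref{eq:FugledeAux6}, we obtain
\begin{align*}
-\int_{\Rd} (\chi_h - \chi_{h_\lambda}) (\nabla\cdot\xi + (d{-}1)) \dx
= \int_{\mathbb{T}_\pi} \big(\Phi(h_\lambda(\vec{\theta}))-\Phi(h(\vec{\theta}))\big)\dvectheta,
\qquad \Phi(t):=(d{+}1)\int_0^t s(1{-}s)^{d-1}\ds .
\end{align*}
Here we absorb the Jacobian weight from the angular parametrization into the measure, as the paper does throughout. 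Note that $\Phi(t)=\frac{d+1}{2}t^2+O(t^3)$, and $|\Phi'(t)|\lesssim|t|$ for $|t|\leq\delta$.

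The proof then reduces to showing
\[
\Big|\int_{\mathbb{T}_\pi}\big(\Phi(h_\lambda)-\Phi(h)\big)\dvectheta\Big|\leq \frac{1}{C'}E_{rel}[F|\xi].
\]
By the mean value theorem, $|\Phi(h_\lambda)-\Phi(h)|\lesssim (|h|+|h_\lambda|)\,|h-h_\lambda|$. Cauchy--Schwarz then gives the bound
\[
\big(\|h\|_{L^2}+\|h_\lambda\|_{L^2}\big)\,\|h-h_\lambda\|_{L^2}.
\]
For the first factor we use~\eqref{eq:L2estimateHeight} and~\eqref{eq:L2estimateSmoothHeight}, together with~\eqref{eq:errorControlSlice}, which give $\|h\|_{L^2}^2+\|h_\lambda\|_{L^2}^2\lesssim E_{rel}[F|\xi]$. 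It therefore suffices to prove $\|h-h_\lambda\|_{L^2}\ll E_{rel}[F|\xi]^{1/2}$.

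This is the main step. I plan to use the standard mollification estimate in $L^1$, namely $\|h-h_\lambda\|_{L^1}\leq \lambda\,|Dh|(\mathbb{T}_\pi)$ (applied to the periodic extension), combined with the $L^\infty$ bound $\|h-h_\lambda\|_{L^\infty}\leq 2\delta$. Together these give
\[
\|h-h_\lambda\|_{L^2}^2\leq 2\delta\lambda|Dh|(\mathbb{T}_\pi).
\]
We then bound $|Dh|$ using Lemma~\ref{lem:L2estimatesBVgraphApprox}:
\begin{itemize}
\item on $\Theta_{\text{Fuglede}}$, by Cauchy--Schwarz and~\eqref{eq:L2estimateDerivativeHeightL2}, $\int|\nabla h|\lesssim E_{rel}^{1/2}$;
\item off $\Theta_{\text{Fuglede}}$, by~\eqref{eq:L2estimateDerivativeHeightL1}, $\int|\nabla h|\lesssim_c E_{rel}$;
\item the singular part satisfies $\|D^sh\|_{TV}\leq E_{rel}$ by~\eqref{eq:L2estimateDerivativeHeight2} and~\eqref{eq:errorControlVertical}.
\end{itemize}
Hence $|Dh|(\mathbb{T}_\pi)\lesssim E_{rel}^{1/2}$, and so $\|h-h_\lambda\|_{L^2}^2\lesssim \delta\lambda E_{rel}^{1/2}$.

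The obstacle is that this bound is not $\ll E_{rel}$ once $\lambda\gg E_{rel}^{1/2}$, which is exactly the situation in~\eqref{eq:mollifierScale} when $d\geq3$. To fix this, I would treat the Fuglede part with an $L^2$ estimate instead: writing $h=h^{(1)}+h^{(2)}$ is awkward, so I would rather split the derivative. This gives
\[
\|h-h_\lambda\|_{L^2}\leq \lambda\|\nabla h\,\chi_{\Theta_{\text{Fuglede}}}\|_{L^2}+(\text{contribution of the $L^1$ part}),
\]
where the $L^1$ part has total variation $\lesssim E_{rel}$ and is controlled in $L^2$ via the interpolation $L^1\cdot L^\infty$, giving $(\delta\lambda E_{rel})^{1/2}$. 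Concretely, I would use the representation $h-h_\lambda=\int\rho(z)\int_0^1 \lambda z\cdot Dh(\cdot-t\lambda z)\,dt\,dz$ and split $Dh$ into $\nabla h\chi_{\Theta_{\text{Fuglede}}}$ and the remainder. Since the slice is only BV, rigorously splitting the measure $Dh$ requires working with that representation directly rather than decomposing $h$ itself. This yields
\[
\|h-h_\lambda\|_{L^2}^2\lesssim \lambda^2E_{rel}+\delta\lambda E_{rel}.
\]
Both terms are $\ll E_{rel}$ provided $\lambda\ll1$, which holds once $\eps\ll_M1$ because $\lambda^{2(d-1)}=M^2E_{rel}\leq M^2\eps$, and provided $\delta\ll1$. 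Plugging this back into the Cauchy--Schwarz bound gives the claim with any prescribed $C'$.
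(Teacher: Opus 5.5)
Your reduction is essentially the paper's: coarea with \eqref{eq:VolumeIntegrand}, the bound $|\Phi(h_\lambda)-\Phi(h)|\lesssim(|h|+|h_\lambda|)|h-h_\lambda|$, Cauchy--Schwarz, and the $L^2$ bounds on $h$ and $h_\lambda$ (cf.\ \eqref{eq:fromBVtoSmoothAux1}--\eqref{eq:fromBVtoSmoothAux3}). This leaves $\|h-h_\lambda\|_{L^2}\ll E_{rel}[F|\xi]^{1/2}$ to prove, and here your route genuinely differs.

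The paper telescopes over dyadic scales and invokes the uniform bound \eqref{eq:aux99a}. Summed, this gives $\|h-h_\lambda\|_{L^2}\lesssim\lambda\|Dh\|_{TV}\lesssim\lambda E_{rel}[F|\xi]^{1/2}$ using only the total variation of $Dh$, hence the rate $\lambda E_{rel}[F|\xi]$.

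You instead use the two-tier structure of Lemma~\ref{lem:L2estimatesBVgraphApprox}. Write $h-h_\lambda=K_\lambda*Dh$, with $K_\lambda$ the $L^1$ kernel implicit in your representation and $\|K_\lambda\|_{L^1}\le\lambda$. Young's inequality turns the $L^2$ control of $\nabla h\,\chi_{\Theta_{\text{Fuglede}}}$ directly into an $L^2$ bound of order $\lambda E_{rel}[F|\xi]^{1/2}$. The $L^1$--$L^\infty$ interpolation is then used only on the remainder, whose mass is $O(E_{rel}[F|\xi])$ rather than $O(E_{rel}[F|\xi]^{1/2})$.

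This yields the weaker rate $\sqrt{\lambda^2+\delta\lambda}\,E_{rel}[F|\xi]$, but it needs only $\lambda\ll1$, and it is more robust. As written, \eqref{eq:aux99a} cannot hold for $BV$ functions with a jump part. If $h$ jumps by $j$ across a flat piece of hypersurface, then at distance $2^{-(k+1)}\lambda$ from it $|h_{2^{-(k+1)}\lambda}-h_{2^{-k}\lambda}|$ is a $k$-independent fraction of $j$, while the right-hand side tends to $0$. The valid $L^1$ version of \eqref{eq:aux99a}, combined with $\|h-h_\lambda\|_{L^\infty}\le2\delta$, runs into exactly the $d\geq3$ obstruction you identified.

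One step of yours needs an extra line. Set $\nu:=Dh-\nabla h\,\chi_{\Theta_{\text{Fuglede}}}\,\mathrm{d}\vec{\theta}$. Interpolating $K_\lambda*\nu$ requires an $L^\infty$ bound on this piece, and $\|h-h_\lambda\|_{L^\infty}\le2\delta$ only controls the sum of both pieces. The bound follows from the identity $K_\lambda*\nu=(h-h_\lambda)-K_\lambda*(\nabla h\,\chi_{\Theta_{\text{Fuglede}}})$ together with the pointwise bound $|\nabla h|\le(1{+}O(\delta))c\le2$ on $\Theta_{\text{Fuglede}}$, which is immediate from \eqref{eq:formulaAbsContPart} and \eqref{eq:smallSlopeRegime}. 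Hence $\|K_\lambda*\nu\|_{L^\infty}\le2\delta+2\lambda$ and $\|K_\lambda*\nu\|_{L^2}^2\lesssim(\delta+\lambda)\lambda E_{rel}[F|\xi]$. This replaces your $\delta\lambda E_{rel}[F|\xi]$ and changes nothing in the conclusion.
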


\begin{proof}[Proof of Lemma~\ref{lem:smoothGraphApprox}]
We change variables and make use of~\eqref{eq:VolumeIntegrand} to compute
\begin{equation}
\begin{aligned}
\label{eq:fromBVtoSmoothAux1}
&\bigg|\int_{\Rd} (\chi_h - \chi_{h_\lambda}) (\nabla\cdot\xi + 1) \dx \bigg|
\\&~~~
\leq \int_{\mathbb{T}_\pi} \bigg|\int_{h}^{h_\lambda} 
(1{-}s)^{d-1}\big(d{+}1)s
\ds\bigg| \dvectheta.
\end{aligned}
\end{equation}
Since $|h|\leq \delta$ and $|h_\lambda|\leq \delta$, we may estimate based
on the $L^2$ estimates~\eqref{eq:L2estimateHeight} and~\eqref{eq:L2estimateSmoothHeight}
\begin{align}
\label{eq:fromBVtoSmoothAux2}
\bigg|\int_{\Rd} (\chi_h - \chi_{h_\lambda}) (\nabla\cdot\xi + 1) \dx \bigg|
\lesssim \int_{\mathbb{T}_\pi} \big|h_\lambda^2 - h^2\big| \dvectheta + O(\delta) E_{rel}[F|\xi].
\end{align}
Furthermore, by $x^2-y^2 = (x-y)(x+y)$, the Cauchy--Schwarz inequality, again the $L^2$
estimates~\eqref{eq:L2estimateHeight} as well as~\eqref{eq:L2estimateSmoothHeight}, and~\eqref{eq:errorControlSlice}
\begin{align}
\label{eq:fromBVtoSmoothAux3}
\int_{\mathbb{T}_\pi} \big|h_\lambda^2 - h^2\big| \dvectheta
\lesssim \bigg(\int_{\mathbb{T}_\pi} (h_\lambda - h)^2 \dvectheta\bigg)^\frac{1}{2} \sqrt{E_{rel}[F|\xi]}.
\end{align}
Smuggling in a telescope sum, we obtain from the triangle inequality
\begin{align}
\bigg(\int_{\mathbb{T}_\pi} (h_\lambda - h)^2 \dvectheta\bigg)^\frac{1}{2} \leq
\sum_{k=0}^\infty \bigg(\int_{\mathbb{T}_\pi} \big(h_{2^{-(k{+}1)}\lambda} - h_{2^{-k}\lambda}\big)^2 \dvectheta\bigg)^\frac{1}{2}.
\end{align}
A $BV$ slicing argument with respect to a suitable hyperplane in~$\Rd[d{-}1]$ 
moreover yields
\begin{align}
\label{eq:aux99a}
\sup_{\vec{\theta}\in\mathbb{T}_\pi} 
\big|h_{2^{-(k{+}1)}\lambda}(\vec{\theta}) - h_{2^{-k}\lambda}(\vec{\theta})\big|
\lesssim 2^{-k}\lambda \|Dh\|_{TV(\mathbb{T}_\pi)}.
\end{align}
Together with the estimate
\begin{equation}
\begin{aligned}
\label{eq:aux99b}
&\|Dh\|_{TV(\mathbb{T}_\pi)}
\\&~~~
\leq \int_{\mathbb{T}_\pi} |\nabla h| \chi_{|\nabla h|\geq 1} \dvectheta
 + \sqrt{\mathcal{L}^{d-1}(\mathbb{T}_\pi)} \bigg( \int_{\mathbb{T}_\pi} |\nabla h|^2 \chi_{|\nabla h|\leq 1} \dvectheta\bigg)^\frac{1}{2}
 + \|D^sh\|_{TV(\mathbb{T}_\pi)},
\end{aligned}
\end{equation}
it follows from~\eqref{eq:L2estimateDerivativeHeightL2}--\eqref{eq:L2estimateDerivativeHeight2}, 
\eqref{eq:mollifierScale}, and the previous estimates
\begin{align}
\int_{\mathbb{T}_\pi} \big|h_\lambda^2 - h^2\big| \dvectheta
\lesssim M^\frac{1}{d{-}1}E_{rel}[F|\xi]^\frac{1}{2(d{-}1)}E_{rel}[F|\xi].
\end{align}
Plugging this estimate back into~\eqref{eq:fromBVtoSmoothAux2} yields the claim.
\end{proof}

\subsection{The Fuglede argument for the smooth graph approximation}
We move on by estimating the error between the smooth graph approximation~$\chi_{h_\lambda}$
of the fixed competitor~$F$ and the minimizer~$B_1$.

\begin{lemma}
\label{lem:Fuglede}
For each $\gamma \in (0,1)$ one may choose~$M \gg 1$, $\delta \ll_M 1$, $\eps \ll_{\delta,M} 1$, $a \in \Rd[]$ and~$\vec{b} \in \Rd$
such that for $\lambda$ from~\eqref{eq:mollifierScale}
with corresponding mollified height function~$h_\lambda$ from~\eqref{eq:mollifiedHeigth}
it holds
\begin{equation}
\begin{aligned}
\label{eq:Fuglede}
&-\int_{\Rd} (\chi_{h_\lambda} - \chi_{B_1}) (\nabla\cdot\xi + 1) \dx
- \int_{\Rd} (\chi_{h_\lambda} - \chi_{B_1}) L_{a,\vec{b}} \dx
\\&~~~~~~~~~~~
\geq - (1{+}\gamma)\frac{1}{2} E_{rel}[F|\xi].
\end{aligned}
\end{equation}
as well as
\begin{equation}
\begin{aligned}
\label{eq:Fuglede2}
&-\int_{\Rd} (\chi_F - \chi_{h_\lambda}) L_{a,\vec{b}} \dx
\geq - \gamma E_{rel}[F|\xi].
\end{aligned}
\end{equation}
\end{lemma}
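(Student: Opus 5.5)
The plan is to rerun the argument of Section~\ref{subsec:classicalFuglede} with $h_\lambda$ in place of the $W^{1,\infty}$ height function. Lemma~\ref{lem:L2estimatesSmoothGraphApprox} will replace the lower bound \eqref{eq:FugledeAux5}, which was the only place where Lipschitz smallness was needed (Remark~\ref{rem:proofClassicalFuglede}). All other steps only use $\|h_\lambda\|_{L^\infty}\le\delta$.

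\emph{Step 1 (bulk term).} Since $|h_\lambda|\le\delta$, the coarea formula and \eqref{eq:VolumeIntegrand} give
\[
-\int_{\Rd}(\chi_{h_\lambda}-\chi_{B_1})(\nabla\cdot\xi+(d{-}1))\dx=-\int_{\mathbb{T}_\pi}\int_0^{h_\lambda}(1{-}s)^{d-1}(d{+}1)s\ds\,\mathcal{J}\dvectheta ,
\]
where $\mathcal J$ is the angular Jacobian. The right-hand side is bounded below by $-(1{+}O(\delta))\frac{d+1}{2}\int_{\partial B_1}\tilde h_\lambda^2\dH$, with $\tilde h_\lambda$ the function on $\partial B_1$ corresponding to $h_\lambda$. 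We then expand $\tilde h_\lambda$ into spherical harmonics as in \eqref{eq:FugledeAux8}--\eqref{eq:FugledeAux9}.

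\emph{Step 2 (modes $k\le1$ and choice of $L_{a,\vec b}$).} We choose $a=c_0\int_{\partial B_1}\tilde h_\lambda\dH$ and $\vec b=c_1\int_{\partial B_1}x\tilde h_\lambda\dH$ as in \eqref{eq:choiceOfShifts}. Expanding $-\int(\chi_{h_\lambda}-\chi_{B_1})L_{a,\vec b}\dx$ along normal rays produces $c_0\|\pi_0\tilde h_\lambda\|^2+c_1\|\pi_1\tilde h_\lambda\|^2$ up to an error $O(\delta)(|a|+|\vec b|)\|\tilde h_\lambda\|_{L^2}\lesssim_{c_0,c_1}\delta\|\tilde h_\lambda\|_{L^2}^2$. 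Taking $c_0,c_1$ large (say $c_k=d+1$ plus slack) cancels the unstable and indeterminate modes.

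\emph{Step 3 (modes $k\ge2$ and coercivity).} The spectral gap \eqref{eq:sharpSpectralGapStableModes} bounds the contribution of modes $k\ge2$ by
\[
\tfrac12\int_{\partial B_1}\tilde h_\lambda^2+\tfrac12|\nabla_{\partial B_1}\tilde h_\lambda|^2\dH .
\]
We transfer this integral to angular coordinates, where the Jacobian and metric factors are harmless (periodic extension; $|\nabla_{\partial B_1}\tilde h|^2$ is comparable to the coordinate gradient after weighting). Lemma~\ref{lem:L2estimatesSmoothGraphApprox} with $c\ll_\gamma1$ and $C'\gg_\gamma1$ gives $\int h_\lambda^2\le(1{+}O(\delta))\int e_{height}+\frac1{C'}E_{rel}$ and $\frac12\int|\nabla h_\lambda|^2\le(\sqrt{1+c^2}+O(\delta)+\frac1{C'})\int e_{slope}+\ldots$. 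Here the non-Fuglede and vertical parts carry constants at most $2\cdot\frac12$, still $\le 1+$slack per unit of $E_{rel}$. Together with \eqref{eq:errorControlSlice} and \eqref{eq:errorControlVertical}, this yields $\frac12\|\tilde h_\lambda\|^2_{H^1\text{-type}}\le(1{+}\gamma)\frac12E_{rel}[F|\xi]$, which proves \eqref{eq:Fuglede}.

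\emph{Main obstacle.} The delicate point is precisely this bookkeeping. One must verify that the metric of $\partial B_1$ in the coordinates $\vec\theta$ and the weight $(1-s)^{d-1}/|\n\cdot\n|$ match the slope density, so that the sharp factor $\frac12$ survives. This is why the slope estimate is measured against $e_{slope}$ with the factor $2\sqrt{1+c^2}$, where $c\to0$. We must also ensure that the modes $k\le1$ only need $L^2$ control of $h_\lambda$, which already comes with constant $1{+}O(\delta)$.

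\emph{Proof of \eqref{eq:Fuglede2}.} $L_{a,\vec b}$ is affine with $|a|+|\vec b|\lesssim\|h_\lambda\|_{L^2}\lesssim\sqrt{E_{rel}}$. Hence
\[
\Big|\int(\chi_F-\chi_{h_\lambda})L_{a,\vec b}\dx\Big|\lesssim\sqrt{E_{rel}}\,\big(\|\chi_F-\chi_h\|_{L^1(B_R)}+\|\chi_h-\chi_{h_\lambda}\|_{L^1}\big),
\]
using $F\subset B_R$. The first term is $\lesssim\mathcal L^{d-1}(\Theta_{slope})\delta+\mathcal L^d(F\setminus B_{1+\delta})+\mathcal L^d(B_{1-\delta}\setminus F)$, which is $\lesssim_\delta E_{rel}$ by \eqref{eq:controlAmountOfOVerlaps} and the isoperimetric arguments in the proof of Lemma~\ref{lem:BVgraphApprox}. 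The second term is $\lesssim\|h-h_\lambda\|_{L^1}\lesssim\lambda\|Dh\|_{TV}$, which is small by \eqref{eq:aux99a}--\eqref{eq:aux99b}. The product is therefore $o(1)E_{rel}$ as $\eps\to0$, giving \eqref{eq:Fuglede2} for $\eps\ll_{\delta,M}1$.
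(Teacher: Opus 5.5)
Your proposal is correct and is essentially the paper's proof. You obtain \eqref{eq:Fuglede} by rerunning Section~\ref{subsec:classicalFuglede} with Lemma~\ref{lem:L2estimatesSmoothGraphApprox} (together with \eqref{eq:errorControlSlice} and \eqref{eq:errorControlVertical}) in place of \eqref{eq:FugledeAux5}, as Remark~\ref{rem:proofClassicalFuglede} allows. You obtain \eqref{eq:Fuglede2} from $|a|+|\vec b|\lesssim\sqrt{E_{rel}[F|\xi]}$ and the splitting through $\chi_h$; your direct bound $\|h-h_\lambda\|_{L^1}\lesssim\lambda\|Dh\|_{TV(\mathbb{T}_\pi)}$ simply replaces the paper's telescoping argument.

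On the ``main obstacle'' you flag: a mere comparability constant in Step~3 would destroy the sharp factor $\frac12$. In the paper's conventions, however (cf.\ \eqref{eq:errorControlInSlices} and \eqref{eq:formulaBVderivative2}), $\mathrm{d}\vec{\theta}$ and $\nabla h_\lambda$ already stand for the surface measure and tangential gradient on $\partial B_1$. So this transfer is an identity and costs nothing.
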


\begin{proof}[Proof of Lemma~\ref{lem:Fuglede}]
Recalling Remark~\ref{rem:proofClassicalFuglede}, the bounds from Lemma~\ref{lem:L2estimatesSmoothGraphApprox}
precisely allow us to prove~\eqref{eq:Fuglede} based on the same arguments as presented in Section~\ref{subsec:classicalFuglede}
for the proof of the classical Fuglede result~\eqref{eq:FugledeAux2b}, but now of course under the substantially weaker assumption of $E_{rel}[F|\xi] \ll 1$.

It remains to prove~\eqref{eq:Fuglede2}. To this end, it is important to recall from comparing with~\eqref{eq:choiceOfShifts} that
\begin{align}
a = c_0 \int_{\mathbb{T}_\pi} h_\lambda(\vec{\theta}) \dvectheta \text{ and } 
\vec{b} = c_1 \int_{\mathbb{T}_\pi} h_\lambda(\vec{\theta}) f(0,\vec{\theta}) \dvectheta,
\end{align}
so that in particular by~\eqref{eq:L2estimateSmoothHeight} and~\eqref{eq:errorControlSlice} it follows
\begin{align}
\label{eq:estimatesShifts}
|a| + |\vec{b}| \lesssim \sqrt{E_{rel}[F|\xi]}. 
\end{align}
This in turn entails
\begin{align}
\bigg|\int_{\Rd} (\chi_{h} - \chi_{h_\lambda}) L_{a,\vec{b}} \dx\bigg|
\lesssim \sqrt{E_{rel}[F|\xi]} \int_{\mathbb{T}_\pi} |h - h_\lambda| \dvectheta.
\end{align}
A telescope bound together with~\eqref{eq:aux99a}, \eqref{eq:aux99b}, \eqref{eq:L2estimateDerivativeHeightL2}--\eqref{eq:L2estimateDerivativeHeight2}, 
and \eqref{eq:mollifierScale} thus guarantees
\begin{align}
\label{eq:aux98a}
\bigg|\int_{\Rd} (\chi_{h} - \chi_{h_\lambda}) L_{a,\vec{b}} \dx\bigg|
\lesssim M^\frac{1}{d{-}1}E_{rel}[F|\xi]^\frac{1}{2(d{-}1)}E_{rel}[F|\xi],
\end{align}
which is a bound of required form.

To control the contribution from the difference between the competitor~$F$ and its
$BV$ graph approximation, we observe that, 
due to the assumption $F\subset B_R$,
we may simply employ the argument from the proof of Lemma~\ref{lem:BVgraphApprox}
to deduce that for every $\gamma \in (0,1)$ one may choose $\delta \ll 1$
and $\eps \ll_{\delta,R} 1$ such that
\begin{align}
\label{eq:aux98b}
\bigg|\int_{\Rd} (\chi_{F} - \chi_{h}) L_{a,\vec{b}} \dx\bigg|
\leq \gamma E_{rel}[F|\xi].
\end{align}

In summary, \eqref{eq:Fuglede2} follows from~\eqref{eq:aux98a} and~\eqref{eq:aux98b}.
\end{proof}

\subsection{Proof of Theorem~\ref{theo:pertRegime}}\label{subsec:proofTheoremPertRegime} 
We complete the
\begin{proof}[Proof of Theorem~\ref{theo:pertRegime}]
It remains to choose for given $\gamma \in (0,1)$ the constants $M \gg 1$, $\delta \ll_M 1$  
and~$\eps \ll_{\delta,M} 1$ such that Lemma~\ref{lem:BVgraphApprox}
and Lemma~\ref{lem:smoothGraphApprox} guarantee
\begin{align}
-\int_{\Rd} (\chi_F - \chi_{h_\lambda}) (\nabla\cdot\xi + 1) \dx
\geq - \gamma E_{rel}[F|\xi]. 
\end{align}
Inserting this estimate as well as the Fuglede-type estimates
from Lemma~\ref{lem:Fuglede} back into our relative
energy equality~\eqref{eq:relativeEnergyEquality}
establishes~\eqref{eq:quantIsoperInequ2}.
\end{proof}

\subsection{Proof of Theorem~\ref{theo:globalRegime}}
\label{subsec:proofMainTheorem}

We proceed in two steps, the combination of which yielding
the claim of Theorem~\ref{theo:globalRegime}.

\textit{Step~1 (Reduction to bounded sets).} We claim that there
exist $R \in [2,\infty)$ and $C \in (1,\infty)$ such that for
all sets of finite perimeter $F \subset \Rd$ with $\mathcal{L}^d(F)=\omega_d$
we may construct another set of finite perimeter $F' \subset \Rd$ such that
\begin{align}
F' &\subset B_R,
\\
\mathcal{L}^d(F') &= \omega_d,
\\
\inf_{x_0 \in \Rd} E_{rel}[F{-}x_0|\xi] &\leq
\inf_{x_0 \in \Rd} E_{rel}[F'{-}x_0|\xi]
\\&~~~\notag
+ C\big(\mathcal{H}^{d{-}1}(\partial^* F) - \mathcal{H}^{d{-}1}(\partial B_1)\big),
\\
\mathcal{H}^{d{-}1}(\partial^* F') - \mathcal{H}^{d{-}1}(\partial B_1)
&\leq C\big(\mathcal{H}^{d{-}1}(\partial^* F) - \mathcal{H}^{d{-}1}(\partial B_1)\big).
\end{align}

This reduction step essentially follows from the existing literature;
in our particular case of a tilt-excess type distance from \cite{FuscoJulin14},
which itself is based on~\cite{Fusco2008}. For the former, a careful
inspection of the argument shows that we only need to deal
with a generalization of~\cite[inequality~(3.5), page~931]{FuscoJulin14}.

The relevant setting is the following: suppose we are given a set of finite
perimeter~$F$ as in the claim as well as a second set of finite perimeter
$\tilde{F} \subset F$ such that $\|\chi_F-\chi_{\tilde{F}}\|_{L^1(\Rd)}
= \mathcal{L}^d(F) - \mathcal{L}^d(\tilde{F}) \lesssim_d \mathcal{H}^{d{-}1}(\partial^* F) - \mathcal{H}^{d{-}1}(\partial B_1)$;
cf.\ \cite[inequality~(3.4), page~930]{FuscoJulin14}. Let $\tilde x$ be such that
$E_{rel}[\tilde{F}{-}\tilde{x}|\xi] = \inf_{x_0 \in \Rd} E_{rel}[\tilde{F}{-}x_0|\xi]$.
We then estimate
\begin{equation}
\begin{aligned}
&\inf_{x_0 \in \Rd} E_{rel}[F{-}x_0|\xi]
- \inf_{x_0 \in \Rd} E_{rel}[\tilde{F}{-}x_0|\xi]
\\&~~~~~~~~~
\leq E_{rel}[F{-}\tilde{x}|\xi] - E_{rel}[\tilde{F}{-}\tilde{x}|\xi]
\\&~~~~~~~~~
= \mathcal{H}^{d{-}1}(\partial^* F) - \mathcal{H}^{d{-}1}(\partial B_1)
\\&~~~~~~~~~~~~
+ \mathcal{H}^{d{-}1}(\partial B_1) - \mathcal{H}^{d{-}1}(\partial^*\tilde{F})
\\&~~~~~~~~~~~~
+ \int_{\Rd} \big(\chi_{F{-}\tilde{x}}-\chi_{\tilde{F}{-}\tilde{x}}\big) \nabla\cdot\xi \dx.
\end{aligned}
\end{equation}
Since the last right hand side term may be estimated by $\|\nabla\cdot\xi\|_{L^\infty}
\|\chi_F-\chi_{\tilde{F}}\|_{L^1(\Rd)}\lesssim_d \mathcal{H}^{d{-}1}(\partial^* F) - \mathcal{H}^{d{-}1}(\partial B_1)$,
we infer the claim of \textit{Step~1} as in the proof of~\cite[Lemma~3.2]{FuscoJulin14}. 

\textit{Step~2 (Contradiction by compactness).} 
Let $R \in [2,\infty)$ be the radius to which the conclusions of \textit{Step~1} apply.
We claim that there exists $C \in (1,\infty)$ such that for all sets of finite perimeter~$F \subset \Rd$
satisfying~\eqref{eq:volumeConstraint2} and~\eqref{eq:diameterConstraint}
it holds
\begin{align}
\label{eq:globalByCompactness}
\frac{1}{C} \inf_{x_0 \in \Rd} E_{rel}[F{-}x_0|\xi]
\leq \mathcal{H}^{d{-}1}(\partial^* F) - \mathcal{H}^{d{-}1}(\partial B_1).
\end{align}

We argue by contradiction and assume~\eqref{eq:globalByCompactness} does not hold.
In particular, for each $k \in \mathbb{N}$ we may choose a set of finite perimeter
$F_k \in \Rd$ satisfying~\eqref{eq:volumeConstraint2} and~\eqref{eq:diameterConstraint}
such that in addition
\begin{align}
\label{eq:globalByCompactnessAux1}
\mathcal{H}^{d{-}1}(\partial^* F_k) < \mathcal{H}^{d{-}1}(\partial B_1) +
\frac{1}{k} \inf_{x_0 \in \Rd} E_{rel}[F_k{-}x_0|\xi].
\end{align}
It immediately follows for $x_k := \frac{1}{\omega_d} \int_{F} x \dx$ that
\begin{align}
\label{eq:globalByCompactnessAux2}
\mathcal{H}^{d{-}1}\big(\partial^* (F_k {-} x_k)\big) < \mathcal{H}^{d{-}1}(\partial B_1) +
\frac{1}{k} E_{rel}[F_k{-}x_k|\xi].
\end{align}
Since by construction $E_{rel}[F_k{-}x_k|\xi] \leq \mathcal{H}^{d{-}1}\big(\partial^*(F_k {-} x_k)\big)
+ \omega_d\|\nabla\cdot\xi\|_{L^\infty(\Rd)}$, we may upgrade the previous inequality to
\begin{align}
\label{eq:globalByCompactnessAux3}
\Big(1 {-} \frac{1}{k}\Big)\mathcal{H}^{d{-}1}\big(\partial^*(F_k {-} x_k)\big) < \mathcal{H}^{d{-}1}(\partial B_1) +
\frac{1}{k} \omega_d\|\nabla\cdot\xi\|_{L^\infty(\Rd)}.
\end{align}
Hence, we may find a sub-sequence $(k_l)_{l \in \mathbb{N}}$
and a set of finite perimeter $F \subset B_R$ such that
$\|\chi_F-\chi_{F_{k_l}{-}x_{k_l}}\|_{L^1(\Rd)} \to 0$ as $l \to \infty$,
in particular $\mathcal{L}^d(F)=\omega_d$, $\int_{F} x\dx = 0$, as well as
$\mathcal{H}^{d{-}1}(\partial^* F) \leq \lim_{l\to\infty}\mathcal{H}^{d{-}1}(\partial^* F_{k_l})$.
From that, we furthermore deduce
\begin{align}
\label{eq:globalByCompactnessAux4}
\mathcal{H}^{d{-}1}(\partial B_1) \leq \mathcal{H}^{d{-}1}(\partial^* F)
\leq \lim_{l\to\infty}\mathcal{H}^{d{-}1}(\partial^* F_{k_l}) \leq \mathcal{H}^{d{-}1}(\partial B_1),
\end{align}
where the first inequality is a consequence of the qualitative isoperimetric inequality,
and where the last inequality follows from~\eqref{eq:globalByCompactnessAux3}.
We deduce $\mathcal{H}^{d{-}1}(\partial B_1) = \mathcal{H}^{d{-}1}(\partial^* F)
= \lim_{l\to\infty}\mathcal{H}^{d{-}1}(\partial^* F_{k_l})$, so that $B_1 = F$ and
\begin{align}
\label{eq:globalByCompactnessAux5}
0 = E_{rel}[F|\xi] = \lim_{l \to \infty} E_{rel}[F_{k_l}{-}x_{k_l}|\xi].
\end{align}
In summary, we obtain a contradiction to Theorem~\ref{theo:pertRegime}
(with $R$ replaced by $2R$).
\qed

\subsection{Proof of Lemma~\ref{lem:coercivity}}
\label{subsec:proofCoercivity}

We again proceed in two steps.

\textit{Step~1 (Proof of~\eqref{eq:controlFraenkel}).}
Smuggling in the $BV$ graph approximation~\eqref{eq:BVgraphApprox} to~$\chi_F$,
we deduce from the triangle inequality that
\begin{align}
\big|\mathcal{L}^d(F \Delta B_1)\big|^2
\leq 4 \bigg(\int_{\Rd} |\chi_F - \chi_{h}| \dx\bigg)^2
+ 4 \bigg(\int_{\Rd} |\chi_{h} - \chi_{B_1}| \dx\bigg)^2.
\end{align}
The first term yields a bound of required type by the argument for the proof
of Lemma~\ref{lem:BVgraphApprox}, whereas the second term as well yields
a bound of required type by the $L^2$-control~\eqref{eq:L2estimateHeight}
and the coercivity property~\eqref{eq:errorControlSlice}.

\textit{Step~2 (Proof of~\eqref{eq:tiltExcessControl}).} 
Recall that $\xi = \max\{1{-}s^2,0\}\nabla s$, where $s$ denotes the signed distance
to~$\partial B_1$ such that $\nabla s(x) = \n_{\partial B_1}(\frac{x}{|x|}) =  - \frac{x}{|x|}$.
We first recognize
\begin{align}
\frac{1}{2}\Big|\n_{\partial^*F}(x)
- \Big(-\frac{x}{|x|}\Big)\Big|^2
\leq 1 - \n_{\partial^*F}(x) \cdot  \n_{\partial B_1}\Big(\frac{x}{|x|}\Big).
\end{align}
In case $\n_{\partial^* F}(x) \cdot \n_{\partial B_1}(\frac{x}{|x|}) < 0$ we have
\begin{equation}
\begin{aligned}
1 - \n_{\partial^*F}(x) \cdot  \n_{\partial B_1}\Big(\frac{x}{|x|}\Big)
\leq 2 \leq 2\big(1 - \n_{\partial^* F}(x)\cdot\xi(x)\big),
\end{aligned}
\end{equation}
whereas in case $\n_{\partial^* F}(x) \cdot \n_{\partial B_1}(\frac{x}{|x|}) \geq 0$
it holds due to $\max\{1{-}s^2,0\} \leq 1$
\begin{equation}
\begin{aligned}
1 - \n_{\partial^*F}(x) \cdot  \n_{\partial B_1}\Big(\frac{x}{|x|}\Big)
\leq 1 - \n_{\partial^* F}(x)\cdot\xi(x).
\end{aligned}
\end{equation}
The previous three displays thus imply~\eqref{eq:tiltExcessControl}. 
\qed


\bibliographystyle{abbrv}
\bibliography{quantitative_isoperimetric_inequality}

\end{document}